\theoremstyle{plain}
\newtheorem{theorem}{Theorem}[section]
\newtheorem{lemma}{Lemma}[section]
\theoremstyle{definition}
\theoremstyle{remark}
\numberwithin{equation}{section}
\begin{document}

\title[On the Diophantine equation $(x+1)^{k}+(x+2)^{k}+...+(2x)^{k}=y^n$]
{On the Diophantine equation $(x+1)^{k}+(x+2)^{k}+...+(2x)^{k}=y^n$ }

\author{Attila B\'{e}rczes, Istv\'{a}n Pink, Gamze Sava\c{s}, and G\"{o}khan Soydan}

\address{{\bf Attila B\'erczes}\\
	University of Debrecen \\
	Institute of Mathematics\\
	H-4032 Debrecen, Hungary}
\email{berczesa@science.unideb.hu}

\address{{\bf Istv\'{a}n Pink}\\
	University of Debrecen \\
	Institute of Mathematics\\
	H-4032 Debrecen, Hungary}
\email{pinki@science.unideb.hu}

\address{{\bf Gamze Sava\c{s}}\\
Department of Mathematics \\
Uluda\u{g} University\\
 16059 Bursa, Turkey}
\email{gamzesavas91@gmail.com}

\address{{\bf G\"{o}khan Soydan} \\
	Department of Mathematics \\
	Uluda\u{g} University\\
	16059 Bursa, Turkey}
\email{gsoydan@uludag.edu.tr }

\newcommand{\acr}{\newline\indent}

\thanks{}

\subjclass[2010]{11D41, 11D61}
\keywords{Power sums, powers, polynomial-exponential congruences, linear forms in two logarithms.}

\begin{abstract}
In this work, we give upper bounds for $n$ on the title equation. Our results depend on assertions describing the precise exponents of $2$ and $3$ appearing in the prime factorization of $T_{k}(x)=(x+1)^{k}+(x+2)^{k}+...+(2x)^{k}$. Further, on combining Baker's method with the explicit solution of polynomial exponential congruences (see e.g. \cite{BHMP}), we show that for $2 \leq x \leq 13, k \geq 1, y \geq 2$ and $n \geq 3$ the title equation has no solutions.
\end{abstract}

\maketitle

\section{Introduction}\label{sec:1}

Let $x$ and $k$ be positive integers.  Write
\begin{equation*}
S_{k}(x)= 1^k + 2^k + ... +x^k
\end{equation*}
for the sum of the $k-th$ powers of the first $x$ positive integers. The Diophantine equation

\begin{equation}  \label{eq:1.1}
 S_{k}(x)= y^{n} \:,
\end{equation}
in unknown positive integers $k,n,x,y$ with $n\geq 2$ has a rich history. In $1875$, the classical question of Lucas \cite{Lu} was whether equation \eqref{eq:1.1} has only the solutions $x=y=1$ and $x=24$, $y=70$ for $(k,n)=(2,2)$. In $1918$, Watson \cite{Wat} solved equation \eqref{eq:1.1} with $(k,n)=(2,2)$. In $1956$,  Sch\"{a}ffer \cite{Sch} considered  equation \eqref{eq:1.1}. He showed, for fixed  $k\geq 1$ and $n\geq 2$, that \eqref{eq:1.1} possesses at most finitely many solutions in positive integers $x$ and $y$, unless

\begin{equation}  \label{eq:1.2}
(k,n) \in \{(1,2),(3,2),(3,4),(5,2)\}
\end{equation}
where, in each case, there are infinitely many such solutions.  There are several effective and ineffective results concerning  equation \eqref{eq:1.2}, see the survey paper \cite{GP}. Sch\"{a}ffer's conjectured that \eqref{eq:1.2} has the unique non-trivial (i.e. $(x,y)\neq (1,1)$) solution, namely $(k,n,x,y)=(2,2,24,70)$. In $2004$, Jacobson, Pint\'{e}r, Walsh \cite{JPW} and Bennett, Gy\H{o}ry, Pint\'{e}r \cite{BGP}, proved that the Sch\"{a}ffer's conjecture is true if $2\leq k\leq 58$, $k$ is even $n=2$ and $ 2\leq k \leq 11 $, $n$ is arbitrary, respectively. In $2007$, Pint\'{e}r \cite{P}, proved that the equation

\begin{equation} \label{eq:1.3}
S_{k}(x)= y^{2n}, \ \mbox{in positive integers} \ \ x,y,n \ \mbox{with} \ n>2
\end{equation}
has only the trivial solution $(x,y)=(1,1)$ for odd values of $k$, with $1\leq k<170.$\\

In $2015$, Hajdu \cite{H}, proved that Sch\"{a}ffer's conjecture holds under certain assumptions on $x$, letting all the other  parameters free. He also proved that the conjecture is true if $x \equiv 0,3 \pmod{4}$ and $x<25$. The main tools in the proof of this result were the $2$-adic valuation of $S_{k}(x)$ and local methods for polynomial-exponential congruences. Recently B\'{e}rczes, Hajdu, Miyazaki and Pink \cite{BHMP}, provided all solutions of equation \eqref{eq:1.1} with $1\leq x<25$  and $n\geq 3$.\\

Now we consider the Diophantine equation

\begin{equation}  \label{eq:1.4}
(x+1)^k+(x+2)^k+ ... + (x+d)^k=y^{n}
\end{equation}\
for fixed positive integers $k$ and $d$.\\

In $2013$, Zhang and Bai \cite{ZB}, considered the Diophantine equation \eqref{eq:1.4} with $k=2$. They first proved that all integer solutions of equation \eqref{eq:1.4} such that $n>1$ and $d=x$ are $(x,y)=(0,0)$, $(x,y,n)=(1, \pm 2, 2), (2, \pm 5, 2), (24,\pm 182, 2)$ or $(x,y)=(-1,-1)$ with $2\nmid n$.  Secondly, they showed that if $p\equiv\pm 5 \pmod{12}$ is prime, $p\mid d$ and $v_{p}(d)  \not\equiv 0 \pmod{n}$, then equation \eqref{eq:1.4} has no integer solution $(x, y)$ with $k=2$. In $2014$, the equation
\begin{equation}\label{eq:1.44}
(x-1)^k+x^k+(x+1)^k=y^n  \quad x, y, n \in\mathbb{Z},  \quad n \geq 2,
\end{equation}
was solved completely by Zhang \cite{Zh}, for $k=2, 3, 4$ and the next year, Bennett, Patel and Siksek \cite{BPS},  extend Zhang's result, completely solving equation \eqref{eq:1.44} in the cases $k=5$ and $k= 6$. In 2016, Bennett, Patel and Siksek \cite{BPS2}, considered the equation \eqref{eq:1.4}. They gave the integral solutions to the equation \eqref{eq:1.4} using linear forms in logarithms, sieving and Frey curves where $k=3$, $2\leq d \leq 50,$ $x\geq1$ and $n$ is prime.\

Let $k\geq2$ be even, and let $r$ be a non-zero integer. Recently, Patel and Siksek \cite{PS}, showed that
for almost all $d\geq2$ (in the sense of natural density), the equation
\begin{equation*}
x^k +(x+r)^k +...+(x+(d-1)r)^k =y^n, \quad x, y, n \in\mathbb{Z},  \quad n \geq 2
\end{equation*}
has no solutions. Let $k, l\geq2$ be fixed integers. More recently, Soydan \cite{S}, considered the equation
\begin{equation}\label{eq:1.444}
(x+1)^k+(x+2)^k+...+(lx)^k =y^n, \quad x, y\geq 1, n \in\mathbb{Z},  \quad n \geq 2
\end{equation}
in integers. He proved that the equation \eqref{eq:1.444} has only finitely many solutions in positive integers, $x,y,k,n$  where $l$ is even, $n\geq 2$ and $k\neq 1,3$. He also showed that the equation \eqref{eq:1.444} has infinitely many solutions where $n\geq 2$, $l$ is even and $k=1,3$.\\

In this paper, we are interested in the integer solutions of the equation
\begin{equation}  \label{eq:1.5}
 T_{k}(x)=y^{n}
\end{equation}
where
\begin{equation}  \label{eq:1.6}
T_{k}(x)=(x+1)^k+(x+2)^k+ ... + (2x)^k
\end{equation}
for positive integer $k$. We provide upper bounds for $n$ and give some results about equation \eqref{eq:1.5}.

\section{The main results}\label{sec:2}

Our main results provide upper bounds for the exponent $n$ in equation \eqref{eq:1.5} in terms of $2$ and $3$-valuations  $v_{2}$ and  $v_{3}$ of some functions of $x$ and $x,k$. Further, on combining Theorem \ref{theo:2.1} with Baker's method and with a version of the local method (see e.g. \cite{BHMP}), we show that for $2 \leq x \leq 13, k \geq 1, y \geq 2$ and $n \geq 3$ equation \eqref{eq:1.5} has no solutions.

For a prime $p$ and an integer $m$, let $v_{p}(m)$ denote the highest exponent $v$ such that $p^{v}|m$.

\begin{theorem}  \label{theo:2.1}
$(i)$ Assume first that  $x \equiv 0 \pmod{4}$. Then for any solution $(k,n,x,y)$ of equation \eqref{eq:1.5}, we get

\begin{equation*}
n\leq \left\{
\begin{array}{ll}
v_{2}(x)-1, & \textrm{ if $k=1$ or $k$ is even,} \\
2v_{2}(x)-2, & \textrm{ if $k\geq 3$ is odd.} \\
\end{array}
\right.
\end{equation*}

$(ii)$ Assume that $x \equiv 1 \pmod{4}$ and $k=1$, then for any solution $(k,n,x,y)$ of equation \eqref{eq:1.5}, we get $n\leq v_{2}(3x+1)-1$.\\

Suppose next that $x \equiv 1,5 \pmod{8}$ and  $x \not\equiv 1 \pmod{32}$ with $k\neq 1$. Then for any solution $(k,n,x,y)$ of equation \eqref{eq:1.5}, we get
\begin{equation*}
n\leq \left\{\def\arraystretch{1.2}
\begin{array}{@{}c@{\quad}l@{}}
v_{2}(7x+1)-1, & \text{if $x \equiv 1\pmod{8}$ and k=2,}\\
v_{2}((5x+3)(3x+1))-2, & \text{if $x \equiv 1 \pmod{8}$ and $k=3$,}\\
v_{2}(3x+1), & \text{if $x \equiv 5 \pmod{8}$ and $k\geq 3$ is odd,}\\
$1$, & \text{if $x \equiv 5 \pmod{8}$ and $k\geq 2$ is even,} \\
$2$, & \text{if $x \equiv 9 \pmod{16}$ and $k\geq 4$ is even,}\\
$3$, & \text{if $x \equiv 9 \pmod{16}$ and $k\geq 5$ is odd}\\
& \text{or}\\
& \text{if $x \equiv 17 \pmod{32}$ and $k\geq 4$ is even,} \\
$4$, & \text{if $x \equiv 17 \pmod{32}$ and $k\geq 5$ is odd.} \\
\end{array}\right.
\end{equation*}

$(iii)$ Suppose now that  $x \equiv 0 \pmod{3}$ and $k$ is odd or $x \equiv 0,4 \pmod{9}$ and $k\geq 2$ is even. Then for any solution $(k,n,x,y)$ of equation \eqref{eq:1.5},
\begin{equation*}
n\leq \left\{\def\arraystretch{1.2}
\begin{array}{@{}c@{\quad}l@{}}
v_{3}(x), & \text{if $x \equiv 0 \pmod{3}$ and $k=1$,}\\
v_{3}(x)-1, & \text{if $x \equiv 0 \pmod{9}$ and $k\geq 2$ is even,}\\
v_{3}(kx^2), & \text{if $x \equiv 0 \pmod{3}$ and $k>3$ is odd,}\\
v_{3}(x^2(5x+3)), & \text{if $x \equiv 0 \pmod{3}$ and $k=3$,} \\
v_{3}(2x+1)-1, & \text{if $x \equiv 4 \pmod{9}$ and $k\geq 2$ is even.} \\
\end{array}\right.
\end{equation*}
\end{theorem}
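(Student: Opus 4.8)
The plan is to exploit the elementary but decisive observation that if $(k,n,x,y)$ solves $T_k(x)=y^n$, then for every prime $p$ one has $n\mid v_p(T_k(x))$, whence $n\le v_p(T_k(x))$ as soon as $v_p(T_k(x))\ge1$. Thus each asserted bound follows once the \emph{exact} value of $v_2(T_k(x))$ or $v_3(T_k(x))$ is determined in the corresponding congruence class of $x$ and range of $k$; these exact valuations are precisely the ``assertions on the precise exponents of $2$ and $3$'' announced in the abstract, and they carry the whole content.

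To compute them I would first record a usable closed form for $T_k(x)$. Writing $T_k(x)=S_k(2x)-S_k(x)$ and inserting Faulhaber's formula $S_k(N)=\tfrac{1}{k+1}\sum_{m=0}^{k}\binom{k+1}{m}B_mN^{k+1-m}$ gives
\[
T_k(x)=\sum_{m=0}^{k}\frac{\binom{k+1}{m}B_m\,(2^{k+1-m}-1)}{k+1}\,x^{k+1-m},
\]
a polynomial in $x$ whose lowest-degree term is $B_k\,x$ when $k=1$ or $k$ is even, and $\tfrac{3}{2}kB_{k-1}x^2$ when $k\ge3$ is odd (the intermediate odd-index Bernoulli numbers vanishing). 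For the smallest $k$ I would in parallel use the explicit factorisations $T_1(x)=\tfrac{x(3x+1)}{2}$, $T_2(x)=\tfrac{x(2x+1)(7x+1)}{6}$ and $T_3(x)=\tfrac{x^2(5x+3)(3x+1)}{4}$, which already exhibit the factors $3x+1,\,7x+1,\,5x+3,\,2x+1$ appearing throughout the statement.

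The two regimes are then handled differently. When $p\mid x$ (the case $x\equiv0\pmod4$, and the cases $x\equiv0\pmod3$ of part $(iii)$), every factor $x$ contributes at least $v_p(x)$, so the lowest-degree term of the expansion dominates and $v_p(T_k(x))$ is governed by $v_p$ of its leading coefficient. The key input here is the von Staudt--Clausen theorem, giving $v_2(B_k)=-1$ for $k=1$ and all even $k$, and $v_3(B_k)=-1$ exactly when $k$ is even; substituting these reproduces the bounds $v_2(x)-1$, $2v_2(x)-2$, $v_3(x)-1$ and $v_3(kx^2)$, after checking that every higher-degree term carries strictly larger valuation. When $p\nmid x$ (the odd-$x$ cases of part $(ii)$ and the class $x\equiv4\pmod9$) this heuristic collapses, since every power of $x$ is a $p$-adic unit; there I would instead evaluate $T_k(x)$ modulo a fixed power of $p$, exploiting the periodicity of $j^{k}\bmod 2^{M}$ (respectively $\bmod\,3^{M}$) in $k$ to reduce the $k$-dependence to a finite check, together with the pairing identity $(x+t)^k+(2x+1-t)^k\equiv k(3x+1)(2x+1-t)^{k-1}\pmod{(3x+1)^2}$ for odd $k$, which forces the factor $3x+1$ to the surface.

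I expect the genuine difficulty to lie entirely in this second regime: the odd-$x$ $2$-adic analysis yielding the constant bounds $1,2,3,4$ for $x\equiv5\pmod8$, $x\equiv9\pmod{16}$ and $x\equiv17\pmod{32}$. There no single term dominates, the answer is a fixed integer independent of all large $k$, and it must be extracted from a delicate cancellation in $T_k(x)\bmod 2^{5}$; establishing that the valuation equals \emph{exactly} the stated constant (not merely bounds it from below) for every admissible $k$ is the crux, and is where the eventual periodicity of $k\mapsto T_k(x)\bmod 2^{M}$ and a careful split by the parity of $k$ must be combined. Once all these modular identities are in place, reading off $n\le v_p(T_k(x))$ line by line completes the proof.
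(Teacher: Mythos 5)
Your reduction is exactly the paper's: Theorem \ref{theo:2.1} follows from $n\,v_p(y)=v_p(y^n)=v_p(T_k(x))$ as soon as $v_p(T_k(x))\ge 1$, and all the content sits in the exact valuation formulas, which the paper isolates as Lemma \ref{lem:4.2} and Lemma \ref{lem:4.3}. Your route to those formulas is genuinely different, though: the paper works through power sums --- a MacMillan--Sondow style induction (Lemma \ref{lem:4.1}), Hajdu's formula for $v_3(S_k(x))$ (Lemma \ref{lem:3.4}), the Sondow--Tsukerman congruences (Lemmas \ref{lem:3.3}, \ref{lem:3.5}), and the shift identity \eqref{eq:4.24} (note $Q_k(x)=T_k(x-1)$) to pull odd $x$ back to even $x$ --- whereas you expand $T_k$ in Bernoulli coefficients and use von Staudt--Clausen when $p\mid x$, and pairing plus fixed-modulus periodicity when $p\nmid x$. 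Your $p\mid x$ regime can be pushed through: writing the coefficient of $x^{d}$, $d=k+1-m$, as $\binom{k}{m}(2^{d}-1)B_m/d$, then von Staudt--Clausen, the lifting-the-exponent value $v_3(2^d-1)=1+v_3(d)$ for even $d$, and $v_p\binom{k}{j}\ge v_p(k)-v_p(j)$ let one check that the lowest term dominates (this check is real work, especially for the bound $v_3(kx^2)$, whose target involves the unbounded quantity $v_3(k)$, but it is correct). Your pairing identity is also correct and genuinely settles the unbounded case $x\equiv 5\pmod 8$, $k\ge 3$ odd, once you add the missing counting step: the odd elements of $\{x+1,\dots,2x\}$ pair among themselves into $(x-1)/4$ pairs, an odd number, so the pair sum is a $2$-adic unit and $v_2(T_k(x))=v_2(3x+1)$.

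The genuine gap is the case $x\equiv 4\pmod 9$, $k\ge 2$ even, where the assertion is $n\le v_3(2x+1)-1$. This valuation is unbounded as $x$ runs over the class (take $2x+1=3^d$), so no evaluation of $T_k(x)$ ``modulo a fixed power of $3$'' can establish it: a computation mod $3^M$ only detects valuations below $M$. Moreover the pairing identity you offer surfaces $3x+1$, which is $\equiv 1\pmod 3$ and hence $3$-adically useless here. The relevant pairing lives inside $S_k(2x)$ (pair $j$ with $2x+1-j$), which for even $k$ gives $T_k(x)\equiv S_k(x)-k(2x+1)S_{k-1}(x)\pmod{(2x+1)^2}$; one must then determine $S_k(x)$ modulo $3^d$ for $x\equiv\frac{3^d-1}{2}\pmod{3^d}$, and that is exactly the Sondow--Tsukerman statement, Lemma \ref{lem:3.5} $(iv)$, which the paper invokes and which does not follow from periodicity of $j^k$. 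A smaller but related omission: even your constant-bound cases ($1,2,3,4$) need uniformity in $x$, namely that $T_k(x)\bmod 2^M$ depends only on $x\bmod 2^{M+1}$; this rests on $v_2\bigl(\sum_{r=0}^{2^M-1}r^k\bigr)\ge M-1$, i.e. precisely the MacMillan--Sondow power-sum valuation underlying the paper's Lemma \ref{lem:4.1} --- periodicity in $k$ alone does not control the $x$-dependence.
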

\begin{theorem}  \label{theo:2.3}
Assume that $x \equiv 1,4 \pmod{8}$ or $x \equiv 4,5 \pmod{8}$. Then Eq. \eqref{eq:1.5} has no solution with $k=1$ or $k\geq 2$ is even, respectively.
\end{theorem}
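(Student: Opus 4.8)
The plan is to obtain Theorem \ref{theo:2.3} as an immediate consequence of Theorem \ref{theo:2.1}. The key observation is that each residue class listed in the hypothesis, together with the prescribed behaviour of $k$, pins down the relevant $2$-adic valuation to a concrete small value, and the corresponding bound of Theorem \ref{theo:2.1} then collapses to $n \leq 1$. Since the assertion ``Eq.~\eqref{eq:1.5} has no solution'' is understood in the perfect-power sense $n \geq 2$, such a bound is a contradiction, and no solution can exist.

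First I would treat the case $k = 1$ with $x \equiv 1,4 \pmod{8}$. If $x \equiv 4 \pmod{8}$ then $x \equiv 0 \pmod{4}$ and $v_{2}(x) = 2$, so the ``$k=1$'' branch of Theorem \ref{theo:2.1}$(i)$ gives $n \leq v_{2}(x) - 1 = 1$. If instead $x \equiv 1 \pmod{8}$, then $x \equiv 1 \pmod{4}$, and writing $x = 8m+1$ yields $3x+1 = 4(6m+1)$, whence $v_{2}(3x+1) = 2$; the ``$k=1$'' branch of Theorem \ref{theo:2.1}$(ii)$ then gives $n \leq v_{2}(3x+1) - 1 = 1$.

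Next I would treat the case $k \geq 2$ even with $x \equiv 4,5 \pmod{8}$. For $x \equiv 4 \pmod{8}$ one again has $x \equiv 0 \pmod{4}$ and $v_{2}(x) = 2$, so the ``$k$ even'' branch of Theorem \ref{theo:2.1}$(i)$ yields $n \leq v_{2}(x) - 1 = 1$. For $x \equiv 5 \pmod{8}$, the relevant line of Theorem \ref{theo:2.1}$(ii)$, namely the case ``$x \equiv 5 \pmod{8}$ and $k \geq 2$ is even'', states directly that $n \leq 1$.

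Since every subcase forces $n \leq 1$, contradicting $n \geq 2$, Eq.~\eqref{eq:1.5} has no solution under the stated hypotheses. I expect there to be no genuine obstacle here beyond correctly matching each hypothesis to its case in Theorem \ref{theo:2.1} and performing the two trivial valuation computations $v_{2}(x) = 2$ (for $x \equiv 4 \pmod{8}$) and $v_{2}(3x+1) = 2$ (for $x \equiv 1 \pmod{8}$); all of the substantive work is already contained in Theorem \ref{theo:2.1}.
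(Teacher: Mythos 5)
Your proposal is correct and follows essentially the same route as the paper: both reduce the theorem to Theorem \ref{theo:2.1}, noting that $v_{2}(x)=2$ for $x \equiv 4 \pmod{8}$, $v_{2}(3x+1)=2$ for $x \equiv 1 \pmod{8}$ with $k=1$, and that the case $x \equiv 5 \pmod 8$ with even $k$ directly gives the bound $1$, so every case forces $n \leq 1$, which is impossible.
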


\begin{theorem} \label{theo:numerical}
Consider equation \eqref{eq:1.5} in positive integer unknowns $(x,k,y,n)$ with
$2 \leq x \leq 13, k \geq 1, y \geq 2$ and $n \geq 3$.
Then equation \eqref{eq:1.5} has no solutions.
\end{theorem}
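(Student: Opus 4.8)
The plan is to fix the value of $x$ and, for each of the twelve cases $x \in \{2,3,\ldots,13\}$, to rule out solutions of \eqref{eq:1.5} with $k \geq 1$, $y \geq 2$, $n \geq 3$ by a three-layered argument that combines the valuation bounds of Theorem \ref{theo:2.1} and Theorem \ref{theo:2.3}, Baker's method, and the local method for polynomial-exponential congruences of \cite{BHMP}. Throughout I would exploit that, by \eqref{eq:1.6}, $T_k(x)$ is a sum of $x$ consecutive $k$-th powers whose dominant term is $(2x)^k$; in particular
$$(2x)^k < T_k(x) < x\,(2x)^k,$$
so that the ratio $T_k(x)/(2x)^k$ lies in $(1,x)$ and tends to $1$ as $k \to \infty$.

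First I would dispatch as many pairs $(x,k)$ as possible with the elementary valuation bounds already established. Writing out the residues of each $x$ modulo $4,8,16,32,3,9$, Theorem \ref{theo:2.3} excludes several classes outright (those $x \equiv 1,4 \pmod 8$ with $k=1$, and those $x \equiv 4,5 \pmod 8$ with $k$ even), while Theorem \ref{theo:2.1} supplies, in the classes it covers, an explicit upper bound for $n$ in terms of $v_2$ or $v_3$ of an affine function of $x$. For the bulk of these classes the resulting bound is a constant $\le 2$, which already contradicts $n \ge 3$; in the remaining classes it leaves only finitely many admissible $n$ (typically $n \in \{3,4\}$). This step is purely a bookkeeping of residue classes and valuations.

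Next I would bound $n$ absolutely in the cases not settled above — most notably $x \equiv 2 \pmod 8$ (e.g. $x=2,10$), to which neither theorem applies, together with the classes in which Theorem \ref{theo:2.1} yields a $k$-dependent bound such as $v_3(kx^2)$. Taking logarithms in \eqref{eq:1.5} produces the linear form in two logarithms
$$\Lambda := n\log y - k\log(2x) = \log\!\bigl(T_k(x)/(2x)^k\bigr) \in (0,\log x).$$
Since $T_k(x)/(2x)^k = 1+\rho_k$ with $0 < \rho_k \le (x-1)\bigl(1-\tfrac{1}{2x}\bigr)^k$, one has $\log\Lambda \le -c\,k + O(1)$ with $c = -\log\bigl(1-\tfrac{1}{2x}\bigr) > 0$, i.e. $\Lambda$ is exponentially small in $k$. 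Confronting this with a Laurent-type lower bound for $|\Lambda|$ — in which the height of $y$ satisfies $\log y \approx (k/n)\log(2x)$, so the quantity $b' = k/\log y + n/\log(2x)$ is $\asymp n$ — yields an inequality of the shape $c\,n \le C\,(\log n)^2$ once $k$ is large (the factor $k$ cancelling), whence $n \le N_0$ for an effectively computable $N_0 = N_0(x)$. The finitely many small $k$ left over are checked directly, since for fixed $k$ the number $T_k(x)$ is explicit and its admissible exponents $n$ are read off from its prime factorization.

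Finally, for each fixed $x$ and each of the finitely many surviving exponents $n \in \{3,\ldots,N_0\}$, I would eliminate all remaining $k$ by the local method. For a suitable modulus — a product of primes $q$ with $n \mid q-1$, chosen so that the $n$-th power residues form a small subgroup — the map $k \mapsto T_k(x) \bmod q$ is periodic, so it suffices to verify over the residue classes of $k$ modulo the common period that $T_k(x)$ is never a nonzero $n$-th power residue mod $q$, the case $q \mid y$ being controlled by the valuation bounds of the first step. Assembling such congruences into a covering system that kills every residue class of $k$ then completes the proof. The main obstacle is precisely this last step together with the uniformity required from Baker's method: one must extract a genuinely explicit and small $N_0(x)$ for every $x$, including the awkward $x \equiv 2 \pmod 8$ cases where the valuation machinery is silent, and then, for each admissible pair $(x,n)$, actually exhibit a finite covering system of congruences eliminating all $k$ — a search whose success is not guaranteed a priori and which constitutes the computationally delicate heart of the argument.
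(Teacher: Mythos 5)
Your plan has exactly the architecture of the paper's proof: Theorems \ref{theo:2.1} and \ref{theo:2.3} dispose of $x \in \{4,5,8,9,12,13\}$ (leaving $n \le 5$), a Laurent-type two-logarithm bound handles the remaining set $\{2,3,6,7,10,11\}$, and a congruence sieve with primes $q \equiv 1 \pmod{2n}$ kills the surviving pairs $(x,n)$. However, two of your steps do not work as written. The sieve step is unsound: you propose to check that $T_k(x)$ is ``never a nonzero $n$-th power residue mod $q$,'' with ``the case $q \mid y$ being controlled by the valuation bounds of the first step.'' But those valuation bounds concern only the primes $2$ and $3$; for a sieving prime $q>3$ nothing in the first step rules out $q \mid y$, i.e.\ $T_k(x) \equiv 0 \pmod{q}$. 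A residue class of $k$ on which $T_k(x) \equiv 0 \pmod{q}$ is simply not excluded by the prime $q$: in the paper's procedure zero counts as a power residue, such a class stays in the set $K(o_i)$, and it must be eliminated by a \emph{different} prime of the form $2in+1$. Under your convention a genuine solution could sit in exactly such a class and be overlooked.

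The Baker step is correct in principle but quantitatively too weak to finish the proof. With $\Lambda = n\log y - k\log(2x)$ the coefficient term in Laurent's lemma is indeed still $O(n/\log 2x)$ (since $\log y > (k/n)\log(2x)$), the factor $k$ cancels as you say, and you get an effective $N_0(x)$ --- provided you also verify multiplicative independence of $y$ and $2x$, which holds because $T_k(x)$ is odd for these six values of $x$ (Lemma \ref{lem:4.2}), a point you omit. But your parameter $a_1$ is roughly $(\rho+1)\log y$, whereas the paper first writes $k = Bn + r$ with $|r| \le (n-1)/2$, takes $\alpha_1 = y/(2x)^B$, and gets $a_1 \approx 2\log y$; it moreover splits into the cases $y \le 4x^2$, $y > 4x^2$ and $y > 10^6$, using a factorization check to force $n \le 12$ when $y \le 10^6$. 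These refinements produce the bounds $n_1$ of Table \ref{table1}, and the paper remarks that for $x=11$ the sieve up to $n_1 = 190{,}000$ already took over two days, while running it up to $n_0 = 301{,}000$ would have been too long. The bound coming from your unreduced linear form is several times larger still, so the final sieve as you planned it would not terminate in practice: the $k = Bn + r$ reduction and the case distinction on $y$ are not optional polish but what makes the computational heart of the proof feasible.
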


\section{Auxiliary results}\label{sec:3}
\subsection{Bernoulli polynomials}
The Bernoulli polynomials $B_{q}(x)$ are defined by
\begin{equation*}
\frac{ze^{zx}}{e^{z}-1}=   \sum_{q=0}^{\infty}\frac{B_{q}(x)z^{q}}{q!}, \quad |z|<2\pi.
\end{equation*} Their expansion around the origin is given by
\begin{equation}\label{eq:3.6}
B_{q}(x)=\sum_{i=0}^{q}{\binom{q}{i}}B_{i}x^{q-i},
\end{equation}
where $B_{n}=B_{n}(0)$ for $(n=0,1,2,...)$ are the Bernoulli numbers. For the following properties of Bernoulli Polynomials, we refer to Haynsworth and Goldberg, \cite{AS}, pp. 804-805 (see also Rademacher, \cite{Rd}):
\begin{equation} \label{eq:3.7}
B_{k}=B_{k}(0), \quad  k=0,1,2,...
\end{equation}

\begin{equation} \label{eq:3.8}
B_{2k+1}(0)=B_{2k+1}(1)=B_{2k+1}=0, \quad k=1,2,...
\end{equation}

\begin{equation} \label{eq:3.9}
B_{k}(1-x)=(-1)^{k}B_{k}(x)
\end{equation}

\begin{equation} \label{eq:3.10}
B_{k}(x)+ B_{k}(x+\frac{1}{2})= 2^{1-k}B_{k}(2x)
\end{equation}

\begin{equation} \label{eq:3.11}
(-1)^{k+1}B_{2k+1}(x)> 0, \quad k=1,2,... \quad 0<x<\frac{1}{2}
\end{equation}

The polynomials $S_{k}(x)$ are strongly connected to the Bernoulli polynomials since $S_{k}(x)$ may be expressed as
\begin{equation}\label{eq:3.12}
S_{k}(x)=\frac{1}{k+1}(B_{k+1}(x+1)-B_{k+1}(0)).
\end{equation}

\subsection{Decomposition of the polynomials $S_k(x)$ and $T_k(x)$}

We start by stating some well-known properties of the polynomial $S_k(x)$ which we will need later; see e.g. \cite{Rd} for details.\\

If $k=1$, then $S_{1}(x)=\frac{x(x+1)}{2}$ while, if $k>1$, we can write
\begin{equation*}
S_{k}(x) = \left\{ \def\arraystretch{1.2}
\begin{array}{@{}c@{\quad}l@{}}
\frac{1}{C_{k}}x^{2}(x+1)^{2}R_{k}(x), & \text{if $k>1$ is odd,}\\
\frac{1}{C_{k}}x(x+1)(2x+1)R_{k}(x), & \text{if $k>1$ is even.}\\
\end{array}\right.
\end{equation*}
where $C_{k}$ is a positive integer and $R_{k}(x)$ is a polynomial with integer coefficients.\\
\noindent By $T_k(x)=S_k(2x)-S_k(x)$ the polynomial $T_k(x)$ is also in a strong connection with the Bernoulli polynomials.
This connection is shown in the below Lemma.
\begin{lemma}  \label{lem:3.1}
\begin{equation}  \label{eq:3.13}
T_k(x)= \frac{B_{k+1}(2x+1)- B_{k+1}(x+1)}{k+1}
\end{equation}
\end{lemma}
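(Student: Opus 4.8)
The plan is to reduce the identity directly to the already-recorded relation between $S_k$ and the Bernoulli polynomials, namely equation \eqref{eq:3.12}, using only the defining decomposition $T_k(x)=S_k(2x)-S_k(x)$ that is stated just above the lemma. First I would verify this defining decomposition at the level of the underlying sums: since
\begin{equation*}
S_k(2x)=1^k+2^k+\cdots+(2x)^k \quad\text{and}\quad S_k(x)=1^k+2^k+\cdots+x^k,
\end{equation*}
subtracting cancels the first $x$ terms and leaves exactly $(x+1)^k+(x+2)^k+\cdots+(2x)^k$, which is $T_k(x)$ by the definition in \eqref{eq:1.6}.

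Next I would substitute \eqref{eq:3.12} into both summands. Evaluating that identity at the argument $2x$ and at the argument $x$ gives
\begin{equation*}
S_k(2x)=\frac{1}{k+1}\bigl(B_{k+1}(2x+1)-B_{k+1}(0)\bigr),\qquad S_k(x)=\frac{1}{k+1}\bigl(B_{k+1}(x+1)-B_{k+1}(0)\bigr).
\end{equation*}
Forming the difference $S_k(2x)-S_k(x)$, the two copies of the constant term $B_{k+1}(0)$ cancel, and I obtain
\begin{equation*}
T_k(x)=\frac{B_{k+1}(2x+1)-B_{k+1}(x+1)}{k+1},
\end{equation*}
which is precisely \eqref{eq:3.13}.

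I do not expect any genuine obstacle here: the statement is an immediate algebraic consequence of \eqref{eq:3.12} together with the telescoping identity $T_k(x)=S_k(2x)-S_k(x)$, so the only points requiring a line of justification are the bookkeeping of the summation range (confirming the decomposition of the sums) and the cancellation of the $B_{k+1}(0)$ terms. Both are routine. The real interest of the lemma is that it recasts $T_k(x)$ purely in terms of Bernoulli polynomials, which is what makes the later $2$-adic and $3$-adic valuation arguments underlying Theorem \ref{theo:2.1} tractable; the present statement itself is just the clean starting identity for that program.
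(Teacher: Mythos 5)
Your proof is correct and follows essentially the same route as the paper: the paper cites Rademacher's general identity $\sum_{n=M}^{N-1}n^{k}=\frac{1}{k+1}\{B_{k+1}(N)-B_{k+1}(M)\}$ with $M=x+1$, $N=2x+1$, while you recover exactly that computation by subtracting two instances of its special case \eqref{eq:3.12} via $T_k(x)=S_k(2x)-S_k(x)$. The difference is purely cosmetic; if anything, your version is slightly more self-contained, since it uses only identities already displayed in the paper.
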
 where $B_q(x)$ is the $q$-th Bernoulli polynomial defined by \eqref{eq:3.6}.

\begin{proof}
It is an application of the equality
\begin{equation*}
\sum_{n=M}^{N-1}n^{k}=\frac{1}{k+1}\{B_{k+1}(N)-B_{k+1}(M)\}
\end{equation*} which is given by Rademacher in \cite{Rd}, pp. 3-4.
\end{proof}
Secondly, applying Lemma \ref{lem:3.1} to equation \eqref{eq:1.5}, we have the following:

\begin{lemma}  \label{lem:3.2}
If $k=1$, then $T_{1}(x)= \frac{x(3x+1)}{2}$, while for $k>1$ we can write\\

$(i)$ $T_{k}(x)=\frac{1}{D_{k}}x(2x+1)M_{k}$, \quad if $k\geq 2$ is even,\\

$(ii)$ $T_{k}(x)=\frac{1}{D_{k}}x^2(3x+1)M_{k}$, \quad if $k>1$ is odd \\ \\
where $D_{k}$ is a positive integer and $M_{k}(x)$ is a polynomial with integer coefficients.
\end{lemma}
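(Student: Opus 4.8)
The plan is to treat the three cases separately, using the relation $T_k(x)=S_k(2x)-S_k(x)$ together with the decomposition of $S_k(x)$ recorded just before the statement. The case $k=1$ is a direct computation: substituting $S_1(x)=\tfrac{x(x+1)}{2}$ gives $T_1(x)=S_1(2x)-S_1(x)=\tfrac{2x(2x+1)}{2}-\tfrac{x(x+1)}{2}=\tfrac{x(3x+1)}{2}$, as claimed.

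For even $k\ge 2$ I would insert the even decomposition $S_k(x)=\tfrac{1}{C_k}x(x+1)(2x+1)R_k(x)$. Replacing $x$ by $2x$ yields $S_k(2x)=\tfrac{1}{C_k}(2x)(2x+1)(4x+1)R_k(2x)$, so both summands of $T_k$ carry the common factor $x(2x+1)$, and pulling it out gives
\[
T_k(x)=\tfrac{1}{C_k}\,x(2x+1)\bigl[\,2(4x+1)R_k(2x)-(x+1)R_k(x)\,\bigr].
\]
Setting $D_k=C_k$ and $M_k(x)=2(4x+1)R_k(2x)-(x+1)R_k(x)\in\mathbb{Z}[x]$ establishes part $(i)$.

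For odd $k>1$ the odd decomposition $S_k(x)=\tfrac{1}{C_k}x^2(x+1)^2R_k(x)$ produces, after the same substitution,
\[
T_k(x)=\tfrac{1}{C_k}\,x^2\bigl[\,4(2x+1)^2R_k(2x)-(x+1)^2R_k(x)\,\bigr],
\]
so the factor $x^2$ is immediate. The only genuinely non-routine point is to show that $3x+1$ also divides $T_k(x)$. Here I would switch to the Bernoulli representation of Lemma \ref{lem:3.1} and evaluate at $x=-\tfrac13$: then $2x+1=\tfrac13$ and $x+1=\tfrac23$, so $(k+1)T_k(-\tfrac13)=B_{k+1}(\tfrac13)-B_{k+1}(\tfrac23)$. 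The reflection formula \eqref{eq:3.9} gives $B_{k+1}(\tfrac23)=(-1)^{k+1}B_{k+1}(\tfrac13)=B_{k+1}(\tfrac13)$, since $k+1$ is even, whence $T_k(-\tfrac13)=0$ and $(3x+1)\mid T_k(x)$ over $\mathbb{Q}$. Because $\gcd(x^2,3x+1)=1$, the product $x^2(3x+1)$ divides $T_k(x)$ in $\mathbb{Q}[x]$; writing the quotient as $\tfrac{1}{D_k}M_k(x)$ with $M_k\in\mathbb{Z}[x]$ and $D_k$ the least common denominator of its coefficients yields part $(ii)$.

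The main obstacle is precisely this $3x+1$ divisibility in the odd case, where the symmetry $B_{k+1}(1-t)=B_{k+1}(t)$ for even index does the essential work; the $x^2$ factor, the even case, and the integrality of $M_k$ are all routine bookkeeping once the $S_k$ decomposition is available.
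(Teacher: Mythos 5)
Your proof is correct, but it takes a genuinely different route from the paper's. The paper works entirely inside the Bernoulli-polynomial representation of Lemma \ref{lem:3.1}: it shows that $x=0$ and $x=-\frac{1}{2}$ (for even $k$), respectively $x=-\frac{1}{3}$ and the double root $x=0$ (for odd $k$), are roots of $T_{k}(x)$ using \eqref{eq:3.8} and \eqref{eq:3.9}, and then pins down multiplicities by differentiating, via $T'_{k}(x)=(k+1)\bigl(2B_{k}(2x+1)-B_{k}(x+1)\bigr)$ and the non-vanishing of $B_{k}(1)$ and of $2B_{k}(0)-B_{k}(\tfrac{1}{2})$. You instead feed the quoted factorizations of $S_{k}(x)$ into $T_{k}(x)=S_{k}(2x)-S_{k}(x)$ and extract the common factors by pure algebra; only the $3x+1$ divisibility in the odd case uses the reflection formula \eqref{eq:3.9} at $x=-\tfrac{1}{3}$, which coincides with the paper's own argument for that root. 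Your route buys explicitness and economy: in case $(i)$ you obtain $M_{k}(x)=2(4x+1)R_{k}(2x)-(x+1)R_{k}(x)$ with visibly integer coefficients and $D_{k}=C_{k}$, and you avoid all derivative computations. What it gives up is the multiplicity information: the paper's argument shows the roots are \emph{simple} (respectively that $x=0$ is exactly a double root), whereas you establish only divisibility --- but divisibility is all the lemma asserts, so this is not a gap. Both arguments share the final routine step of clearing denominators to write the quotient as $\tfrac{1}{D_{k}}M_{k}(x)$ with $D_{k}$ a positive integer and $M_{k}\in\mathbb{Z}[x]$, a point you make explicit and the paper leaves implicit.
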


\begin{proof}
$(i)$ Firstly we prove that $x=0$ and $x=- \frac{1}{2}$ are roots of the polynomial $T_{k}(x)$ where $k\geq 2$ is even. By \eqref{eq:3.13}, we have
\begin{equation} \label{eq:3.14}
T_{k}(x)= \frac{B_{k+1}(2x+1)-B_{k+1}(x+1)}{k+1}.
\end{equation} It is clear that $x=0$ and $x=- \frac{1}{2}$ satisfy \eqref{eq:3.14} by using \eqref{eq:3.8} and \eqref{eq:3.9}.\\

Secondly we show that $x=0$ and $x=- \frac{1}{2}$ are simple roots of $T_k(x)$ for $k \geq 2$ even. Since for the Bernoulli polynomials $B_n(x)$ we have
\begin{equation*}
 \frac{dB_{n}(x)}{dx}=nB_{n-1}(x)
\end{equation*}

we may write

\begin{equation} \label{eq:3.15}
T'_{k}(x)=(k+1)({2B_{k}(2x+1)-B_{k}(x+1)})
\end{equation}
and
\begin{equation} \label{eq:3.16}
T''_{k}(x)=k(k+1)({4B_{k-1}(2x+1)-B_{k-1}(x+1)}).
\end{equation}

\vskip.2cm\noindent If $k \geq 2$ is even, then $T'_k(0)=(k+1)(2B_k(1)-B_k(1))=(k+1)B_k(1) \ne 0$. So $x=0$ is a simple root of $T_{k}(x)$. Similarly, since
$T'_k(- \frac{1}{2})=(k+1)(2B_k(0)-B_k(\frac{1}{2})) \ne 0$
it follows that $x=-\frac{1}{2}$ is the simple root of $T_{k}(x)$ where $k$ is even.\\

$(ii)$ Now by \eqref{eq:3.9} and \eqref{eq:3.14} we see that $x=-\frac{1}{3}$ is a root of $T_{k}(x)$ whenever $k>1$ is odd.
Using \eqref{eq:3.9}, \eqref{eq:3.11}, \eqref{eq:3.13} and \eqref{eq:3.14}, we have $T_{k}(-\frac{1}{3})=0\neq T'_{k}(- \frac{1}{3})$. So $x=-\frac{1}{3}$ is a simple root of $T_{k}(x)$ where $k$ is odd. Similarly we can show that $x=0$ is a double root of $T_{k}(x)$ if $k$ is odd. So, the proof is completed.
\end{proof}

\subsection{Congruence properties of $S_k(x)$}

In this subsection we give some useful lemmas which will be used to prove some of our main results.

\begin{lemma}(\cite{ST}, Lemma 1)  \label{lem:3.3}
If $p$ is a prime, $d, \ q \in \mathbb{N}$, $k \in \mathbb{Z^{+}}$, $m_{1} \in p^d\mathbb{N} \cup \{0\}$ and $m_{2} \in p^d\mathbb{N} \cup \{0\}$, then
\begin{equation}\label{eq:3.17}
S_{k}(qm_{1}+m_{2}) \equiv qS_{k}(m_{1})+ S_{k}(m_{2}) \pmod{p^{d}}.
\end{equation}
\end{lemma}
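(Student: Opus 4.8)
The plan is to reduce the whole statement to a single additivity property of $S_k$ modulo $p^d$ and then to finish by an easy induction on $q$. The key observation is that whenever $a$ is divisible by $p^d$, one has $S_k(a+b)\equiv S_k(a)+S_k(b)\pmod{p^d}$ for every nonnegative integer $b$. To see this I would split off the tail of the sum,
\[
S_k(a+b)=\sum_{j=1}^{a}j^k+\sum_{i=1}^{b}(a+i)^k=S_k(a)+\sum_{i=1}^{b}(a+i)^k,
\]
expand each $(a+i)^k$ by the binomial theorem, and interchange the order of summation to obtain
\[
\sum_{i=1}^{b}(a+i)^k=\sum_{t=0}^{k}\binom{k}{t}a^{t}\,S_{k-t}(b).
\]
The term $t=0$ contributes exactly $S_k(b)$, while every term with $t\geq 1$ carries a factor $a^{t}$, hence a factor $p^{dt}$, and therefore vanishes modulo $p^d$. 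This establishes the claimed additivity.

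Second, I would carry out the induction on $q$. Since both $m_1$ and $m_2$ lie in $p^{d}\mathbb{N}\cup\{0\}$, the combination $(q-1)m_1+m_2$ is again divisible by $p^d$; applying the additivity property with $a=(q-1)m_1+m_2$ and $b=m_1$ gives
\[
S_k(qm_1+m_2)\equiv S_k\big((q-1)m_1+m_2\big)+S_k(m_1)\pmod{p^d}.
\]
The induction hypothesis rewrites the first term on the right as $(q-1)S_k(m_1)+S_k(m_2)$, and adding $S_k(m_1)$ yields precisely $qS_k(m_1)+S_k(m_2)$. The base case (either $q=0$ or $q=1$, depending on the convention for $\mathbb{N}$) is immediate, with the empty-sum convention $S_k(0)=0$.

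The only genuine obstacle here is bookkeeping rather than mathematics: I must make sure the divisibility hypothesis is invoked on the correct summand at each stage. The additivity step requires $a$ (not $b$) to be divisible by $p^d$, since it is $a^{t}$ that produces the factor $p^{dt}$; in the inductive step this is guaranteed because $m_1$, $m_2$, and hence every partial combination $jm_1+m_2$, are multiples of $p^d$. I would also remark that the argument uses nothing about $p$ being prime, only that $a\equiv 0\pmod{p^d}$, so the congruence in fact holds for an arbitrary modulus of the form $p^d$; stating it for prime powers is simply what the later applications need.
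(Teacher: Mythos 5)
Your proof is correct. Note that the paper itself contains no argument at this point: its ``proof'' consists of the single sentence that the proof is similar to Lemma 1 of \cite{ST}, so the natural comparison is with the Sondow--Tsukerman argument it points to. That argument is a one-step block decomposition: write $\{1,2,\dots,qm_1+m_2\}$ as $q$ consecutive blocks of length $m_1$ followed by one block of length $m_2$; since $p^d \mid m_1$ and $p^d \mid m_2$, every element of a shifted block is congruent modulo $p^d$ to the corresponding element of $\{1,\dots,m_1\}$ (resp.\ $\{1,\dots,m_2\}$), so each of the $q$ blocks contributes $S_k(m_1)$ and the last one contributes $S_k(m_2)$ modulo $p^d$ --- no binomial theorem and no induction required. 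You reach the same conclusion by proving the additivity $S_k(a+b)\equiv S_k(a)+S_k(b)\pmod{p^d}$ (for $p^d \mid a$) via binomial expansion and then inducting on $q$; this is valid but slightly heavier than necessary, since the termwise congruence $a+i\equiv i \pmod{p^d}$ already gives $(a+i)^k\equiv i^k \pmod{p^d}$ directly, and your induction on $q$ simply reassembles the blocks one at a time. Your closing remarks are accurate and worth keeping: primality of $p$ is never used (the statement holds modulo an arbitrary positive integer $N$ once $m_1,m_2 \in N\mathbb{N}\cup\{0\}$), and the only bookkeeping hazard is that the divisibility hypothesis must be invoked on the shift $a$, not on $b$, which your inductive setup handles correctly.
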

\begin{proof}
The proof is similar to Lemma 1 in \cite{ST}.
\end{proof}
\begin{lemma}(\cite{H}, Lemma 3.2)\label{lem:3.4}
Let $x$ be a  positive integer. Then we have
\begin{equation*}
v_{3}(S_{k}(x))= \left\{ \def\arraystretch{1.2}%
\begin{array}{@{}c@{\quad}l@{}}
v_{3}(x(x+1)), & \text{if $k=1$,}\\
v_{3}(x(x+1)(2x+1))-1, & \text{if $k$ is even,}\\
0, & \text{if $x \equiv 1 \pmod{3}$ and $k\geq 3$ is odd,}\\
v_{3}(kx^2(x+1)^2)-1, & \text{if $x \equiv 0,2 \pmod{3}$ and $k\geq 3$ is odd.} \\
\end{array} \right.
\end{equation*}
\end{lemma}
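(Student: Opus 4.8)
The plan is to read off the $3$-adic valuation from the explicit factorisation of $S_k(x)$ recorded in Section~\ref{sec:3}, and to reduce the whole statement to a single bounded quantity. Writing $S_k(x)=\frac{1}{C_k}x(x+1)(2x+1)R_k(x)$ for even $k$ and $S_k(x)=\frac{1}{C_k}x^2(x+1)^2R_k(x)$ for odd $k>1$, every factor on the right except $R_k(x)/C_k$ has a valuation one can write down at once; the case $k=1$ is immediate from $S_1(x)=x(x+1)/2$ and $3\nmid 2$. The first elementary input I would isolate is that, for every $x$, exactly one of $x,x+1,2x+1$ is divisible by $3$ (check the three residues of $x$ modulo $3$); hence the visible factor is always divisible by $3$ and carries the entire unbounded, $x$-dependent part of $v_3(S_k(x))$. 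What then remains is to compute the bounded residual $v_3(R_k(x))-v_3(C_k)$ in each case.

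Next I would dispose of the residue cases by reducing modulo $3$, using $j^k\equiv j\pmod 3$ for odd $k$ and $j^k\equiv 1\pmod 3$ for $3\nmid j$ and even $k$. For odd $k$ this gives $S_k(x)\equiv S_1(x)=\frac{x(x+1)}2\pmod 3$, which is $\not\equiv 0$ exactly when $x\equiv 1\pmod 3$; this already yields $v_3(S_k(x))=0$ there and, for $x\equiv 0,2\pmod 3$, tells me only that $3\mid S_k(x)$. For even $k$ one gets $S_k(x)\equiv\#\{j\le x:\,3\nmid j\}\pmod 3$, which pins down precisely when the valuation is zero. The remaining task is to obtain the exact valuation in the cases where $3\mid S_k(x)$.

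To gain the needed precision I would expand the reduced sum $\Sigma(x)=\sum_{1\le j\le x,\;3\nmid j}j^k$ over the complete residue blocks $\{3i+1,3i+2\}$, and organise an induction on the $3$-adic size of $x$ through the splitting $S_k(x)=3^kS_k(\lfloor x/3\rfloor)+\Sigma(x)$, which is an instance of the quasi-additivity in Lemma~\ref{lem:3.3}. Since $v_3\bigl(3^kS_k(\lfloor x/3\rfloor)\bigr)\ge k$ and one checks this exceeds the target valuation, the tail does not interfere and $v_3(S_k(x))=v_3(\Sigma(x))$. The decisive arithmetic input is the lifting-the-exponent identity $v_3(2^k+1)=1+v_3(k)$ for odd $k$ (and $v_3(2^k+1)=0$ for even $k$), which is the origin both of the factor $k$ inside $v_3(kx^2(x+1)^2)$ and of the constant $-1$. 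Equivalently, one may read off $v_3(C_k)$ from the von Staudt--Clausen theorem applied to $B_{k+1}$ via \eqref{eq:3.12} and determine the residue of $R_k(x)$ modulo a fixed power of $3$ by a Kummer-type congruence.

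The hard part will be the odd-$k$, $x\equiv 0,2\pmod 3$ regime, where the valuation $v_3(kx^2(x+1)^2)-1$ must be produced uniformly in $k$. There the leading block term $N(1+2^k)$, with $N$ the number of complete residue blocks below $x$, \emph{undercounts} the valuation---for $k=3$, $x=9$ it predicts $3$ whereas in fact $v_3(S_3(9))=v_3(2025)=4$---so one cannot truncate the expansion; every higher-order contribution must be controlled $3$-adically, or else the precise behaviour of $R_k$ modulo powers of $3$ must be extracted from Bernoulli congruences. Checking that the tail $3^kS_k(\lfloor x/3\rfloor)$ never lowers the valuation throughout the induction, and matching the $v_3(k)$ and $-1$ corrections across all residues and parities, is where essentially all the bookkeeping lies; the even-$k$ case then reduces to the milder fact that $v_3(S_k(x))$ does not depend on the even exponent and equals $v_3(S_2(x))$.
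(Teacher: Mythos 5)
Your proposal never becomes a proof in the two cases that carry the actual content of the lemma. (Note also that there is no internal argument in the paper to compare against: the lemma is imported verbatim from Hajdu, \cite{H}, Lemma 3.2, so your attempt has to stand entirely on its own.) What you do establish is correct but easy: the case $k=1$ is immediate from $S_1(x)=x(x+1)/2$, and the reduction $j^k\equiv j\pmod 3$ for odd $k$ correctly gives $v_3(S_k(x))=0$ when $x\equiv 1\pmod 3$. For even $k$, however, saying the case ``reduces to the milder fact that $v_3(S_k(x))$ does not depend on the even exponent and equals $v_3(S_2(x))$'' is circular: since $S_2(x)=x(x+1)(2x+1)/6$, that ``milder fact'' is word-for-word the assertion $v_3(S_k(x))=v_3(x(x+1)(2x+1))-1$ that you are supposed to prove, and you offer no argument for it --- your mod-$3$ count of non-multiples of $3$ only detects \emph{when} the valuation is zero, not what it equals otherwise.

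The same problem is sharper in the case you yourself flag as hard, odd $k\ge 3$ with $x\equiv 0,2\pmod 3$. The splitting $S_k(x)=3^kS_k(\lfloor x/3\rfloor)+\Sigma(x)$ is a reasonable frame, but even the claim that the tail does not interfere is not justified as stated: the bound $v_3\bigl(3^kS_k(\lfloor x/3\rfloor)\bigr)\ge k$ alone does not exceed the target $v_3(k)+2v_3(x(x+1))-1$, which can be arbitrarily large for fixed $k$ (take $k=3$, $x=3^{10}$); one needs the inductive hypothesis at $\lfloor x/3\rfloor$ to compare the two, which is more bookkeeping than you indicate. More seriously, the reduction lands on the exact evaluation of $v_3(\Sigma(x))$, which is where all of the content of the lemma sits, and this is never carried out: your own example ($k=3$, $x=9$, where the block term $N(1+2^k)$ predicts valuation $3$ while the truth is $4$) shows that the truncation you can actually compute gives the wrong answer, and the sentence ``every higher-order contribution must be controlled $3$-adically, or else the precise behaviour of $R_k$ must be extracted from Bernoulli congruences'' defers precisely the decisive estimate. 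Invoking $v_3(2^k+1)=1+v_3(k)$ explains where the $v_3(k)$ and the $-1$ \emph{should} come from, but the correction terms in the block expansion are themselves power sums of the block index, so pinning the valuation at exactly $v_3(kx^2(x+1)^2)-1$ requires a genuine inductive or congruence scheme of the kind supplied in \cite{H} and \cite{MS} (compare Lemma \ref{lem:3.3} and Lemma \ref{lem:3.5}). As it stands, the proposal is a plan with the key step missing, i.e.\ a genuine gap.
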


\begin{lemma}(\cite{ST}, Theorem 3)\label{lem:3.5}
Let $p$ be an odd prime and let $m$ and $k$ be  positive integers.\\

$(i)$ For some integer $d\geq 1$, we can write
\begin{equation*}
m=qp^d+r \frac{p^d-1}{p-1}=qp^d+ rp^{d-1}+ rp^{d-2}+ \cdots +rp^0 ,
\end{equation*}
where $r \in \{0,1,...,p-1\}$ and $0 \leq q \not\equiv r \equiv m \pmod{p}.$\\

$(ii)$ In the case of $m \equiv 0 \pmod{p}$, we have
\begin{equation*}
S_{k}(m) \equiv
\left\{\def\arraystretch{1.2}
\begin{array}{@{}c@{\quad}l@{}}
-p^{d-1} \pmod{p^d}, & \text{if $p-1\mid k$,}\\
0 \pmod{p^d}, & \text{if $p-1 \nmid k$.}\\
\end{array}\right.
\end{equation*}

$(iii)$ In the case of $m \equiv -1 \pmod{p}$, we have
\begin{equation*}
S_{k}(m) \equiv
\left\{ \def\arraystretch{1.2}
\begin{array}{@{}c@{\quad}l@{}}
-p^{d-1}(q+1) \pmod{p^d}, & \text{if $p-1\mid k$,}\\
0 \pmod{p^d}, & \text{if $p-1 \nmid k$.}\\
\end{array}\right.
\end{equation*}

$(iv)$ In the case of $m \equiv \frac{p-1}{2} \pmod{p}$, we have
\begin{equation*}
S_{k}(m) \equiv
\left\{ \def\arraystretch{1.2}
\begin{array}{@{}c@{\quad}l@{}}
-p^{d-1}(q+\frac{1}{2} ) \pmod{p^d}, & \text{if $p-1\mid k$,}\\
0 \pmod{p^d}, & \text{if $p-1 \nmid k$.}\\
\end{array}\right.
\end{equation*}
\end{lemma}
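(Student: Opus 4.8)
The plan is to read part $(i)$ as a base-$p$ representation statement and then reduce parts $(ii)$--$(iv)$ to two explicit power-sum evaluations modulo $p^{d}$, using the additive behaviour of $S_{k}$ supplied by Lemma~\ref{lem:3.3}.

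For $(i)$ I would argue directly from the base-$p$ expansion $m=\sum_{i\ge 0}a_{i}p^{i}$ with $a_{i}\in\{0,\dots,p-1\}$. Set $r:=a_{0}=m\bmod p$ and let $d$ be the length of the initial block of digits equal to $r$, i.e. the largest index with $a_{0}=a_{1}=\cdots=a_{d-1}=r$. Putting $q:=\sum_{i\ge 0}a_{i+d}p^{i}$ gives $m=qp^{d}+r(1+p+\cdots+p^{d-1})=qp^{d}+r\frac{p^{d}-1}{p-1}$, while $a_{d}\ne r$ (or $q=0$ with $r\ne0$ in the terminating case where every digit of $m$ equals $r$, noting $r\ne0$ because $m>0$) yields exactly $0\le q\not\equiv r\equiv m\pmod p$. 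This is the only place the digit structure is needed.

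The engine for $(ii)$--$(iv)$ is that for any $A$ with $p^{d}\mid A$ and any $B\ge 0$ one has $S_{k}(A+B)=S_{k}(A)+\sum_{j=1}^{B}(A+j)^{k}\equiv S_{k}(A)+S_{k}(B)\pmod{p^{d}}$, since $A\equiv 0$ kills every cross term in the binomial expansion; combined with $S_{k}(qp^{d})\equiv qS_{k}(p^{d})$ (Lemma~\ref{lem:3.3} with $m_{1}=p^{d}$, $m_{2}=0$), taking $A=qp^{d}$, $B=rR_{d}$ with $R_{d}=\frac{p^{d}-1}{p-1}$ gives the master formula
\begin{equation*}
S_{k}(m)\equiv qS_{k}(p^{d})+S_{k}(rR_{d})\pmod{p^{d}}.
\end{equation*}
The three residue classes are precisely those for which $rR_{d}$ collapses: $r=0$ gives $rR_{d}=0$; $r=p-1$ gives $rR_{d}=p^{d}-1$, whence $S_{k}(rR_{d})=S_{k}(p^{d})-(p^{d})^{k}\equiv S_{k}(p^{d})$; and $r=\frac{p-1}{2}$ gives $rR_{d}=\frac{p^{d}-1}{2}$. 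Substituting these into the master formula reduces everything to evaluating $S_{k}(p^{d})$ and $S_{k}\!\left(\frac{p^{d}-1}{2}\right)$ modulo $p^{d}$.

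The two remaining evaluations are the analytic heart. For $S_{k}(p^{d})\equiv\sum_{a=0}^{p^{d}-1}a^{k}\pmod{p^{d}}$ I would separate the residues prime to $p$ from the multiples of $p$: the latter contribute $p^{k}S_{k}(p^{d-1})$, which is absorbed by induction on $d$, while on $(\mathbb{Z}/p^{d}\mathbb{Z})^{\times}$ the substitution $u\mapsto cu$ for a primitive root $c$ forces $(c^{k}-1)\sum_{u}u^{k}\equiv 0$; when $p-1\nmid k$ one picks $c$ with $c^{k}\not\equiv 1\pmod p$, so $c^{k}-1$ is a unit and the sum vanishes, and then $(ii)$, $(iii)$ follow at once. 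For $S_{k}\!\left(\frac{p^{d}-1}{2}\right)$ I would use the involution $i\mapsto p^{d}-i$: since $(p^{d}-i)^{k}\equiv(-1)^{k}i^{k}$, pairing gives $2\,S_{k}\!\left(\frac{p^{d}-1}{2}\right)\equiv S_{k}(p^{d}-1)\equiv S_{k}(p^{d})$ whenever $k$ is even (in particular whenever $p-1\mid k$), and dividing by $2$ (a unit, as $p$ is odd) transfers the value of $S_{k}(p^{d})$; feeding this back yields $(iv)$. The main obstacle is the $p-1\mid k$ evaluation of $S_{k}(p^{d})$ to the full modulus $p^{d}$, where the primitive-root cancellation breaks down and one must extract the exact factor $-p^{d-1}$; this is most cleanly done via the von Staudt--Clausen congruence for $B_{k}$, which is natural given the Bernoulli set-up of Section~\ref{sec:3}. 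Everything else is bookkeeping layered on top of the master formula.
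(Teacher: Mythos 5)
Your proposal should first be measured against the right target: the paper offers no proof of this lemma at all --- it is imported with a bare citation of \cite{ST} --- so what you have written is a reconstruction of the source's argument, and in outline it is the same route: additivity of $S_{k}$ modulo $p^{d}$, the master congruence $S_{k}(m)\equiv q\,S_{k}(p^{d})+S_{k}\bigl(r\frac{p^{d}-1}{p-1}\bigr)\pmod{p^{d}}$, and the two evaluations at $p^{d}$ and $\frac{p^{d}-1}{2}$. Your individual steps check out. You are right that Lemma \ref{lem:3.3} as printed (requiring $p^{d}\mid m_{2}$) is too restrictive for this application, and your binomial-expansion version with $B$ arbitrary is exactly what is needed. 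The primitive-root argument plus induction on $d$ does give $S_{k}(p^{d})\equiv 0\pmod{p^{d}}$ when $p-1\nmid k$, and the von Staudt--Clausen route does yield $S_{k}(p^{d})\equiv -p^{d-1}\pmod{p^{d}}$ when $p-1\mid k$: in the Faulhaber expansion $S_{k}(p^{d})=\sum_{j\ge 0}\frac{1}{j+1}\binom{k}{j}B_{k-j}\,p^{d(j+1)}$ every term with $j\ge 1$ is divisible by $p^{d}$ (using $v_{p}(B_{k-j})\ge -1$ and $v_{p}(j+1)\le j-1$ for odd $p$), leaving $B_{k}p^{d}$ with $pB_{k}\equiv -1 \pmod{p}$. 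Part (i) via the run of trailing base-$p$ digits, including the terminating case $q=0$, is correct.

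There are, however, two places where your write-up and the printed statement part ways, and both deserve to be said out loud. First, with $r=0$ your master formula gives $S_{k}(m)\equiv -q\,p^{d-1}\pmod{p^{d}}$, not the printed $-p^{d-1}$: part (ii) as reproduced in this paper has lost the factor $q$ (compare the factors $q+1$ and $q+\frac{1}{2}$ in (iii) and (iv)), and the paper itself silently applies (ii) in the corrected form $T_{k}(x)\equiv -3^{d-1}q\pmod{3^{d}}$ in the proof of Lemma \ref{lem:4.3}. So your claim that (ii) ``follows at once'' is inaccurate as stated; what follows at once is the corrected congruence, and you should note the discrepancy. Second, your pairing argument for (iv) is, as you yourself observe, confined to even $k$, and you never return to the branch ``$p-1\nmid k$ with $k$ odd''. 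No argument could close it: that branch is false as printed. Take $p=3$, $m=1$ (so $d=1$, $q=0$, $r=1=\frac{p-1}{2}$) and any odd $k$: then $S_{k}(1)=1\not\equiv 0\pmod{3}$; or $p=5$, $m=2$, $k=1$, where $S_{1}(2)=3\not\equiv 0\pmod{5}$. Since $p-1\mid k$ forces $k$ even for odd $p$, your argument in fact covers every branch that is true --- and every branch this paper actually uses (only $p=3$ with $k$ even, in Lemma \ref{lem:4.3}) --- but a complete solution must state the parity restriction in (iv) explicitly rather than pass over the odd case in silence.
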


\subsection{Linear forms in logarithms}
For an algebraic number $\alpha$ of degree $d$ over $\mathbb{Q}$, we define the {\it absolute logarithmic height} of $\alpha$ by the following formula:
$$
{\rm h}(\alpha)= \dfrac{1}{d} \left( \log \vert a_{0} \vert + \sum\limits_{i=1}^{d} \log \max \bigr\{ 1, \vert \alpha^{(i)}\vert \bigr\} \right),
$$
where $a_{0}$ is the leading coefficient of the minimal polynomial of $\alpha$ over $\mathbb{Z}$, and $\alpha^{(1)}, \alpha^{(2)}, \,...\,, \alpha^{(d)}$ are the conjugates of $\alpha$ in the field of complex numbers.

Let $\alpha_{1}$ and $\alpha_{2}$ be multiplicatively independent algebraic numbers with $\vert \alpha_{1} \vert \ge 1$ and $\vert \alpha_{2} \vert \ge 1$.
Consider the linear form in two logarithms:
$$
\varLambda=b_{2}\log\alpha_{2}-b_{1}\log \alpha_{1},
$$
where $\log \alpha_{1}, \log \alpha_{2}$ are any determinations of the logarithms of $\alpha_{1}, \alpha_{2}$ respectively,
and $b_{1},b_{2}$ are positive integers.

We shall use the following result due to Laurent \cite{LAUR}.

\begin{lemma}[\cite{LAUR}, Theorem 2] \label{laurentlemma}
Let $ \rho $ and $ \mu $ be real numbers with $ \rho > 1 $ and $ 1/3 \leq \mu \leq 1$.
Set
$$
\sigma=\dfrac{1+2\mu-\mu^{2}}{2}, \quad \lambda= \sigma \log \rho.
$$
Let $a_1,a_2$ be real numbers such that
$$
a_{i} \geq \max \left\lbrace 1, \, \rho \vert \log \alpha_{i} \vert - \log \vert \alpha_{i} \vert + 2D{\rm h}(\alpha_{i}) \right\rbrace \ \ \ \ \ (i=1,2),
$$
where
$$
D= \left[  \mathbb{Q}(\alpha_{1}, \alpha_{2} ): \mathbb{Q} \right]/\left[  \mathbb{R}(\alpha_{1}, \alpha_{2} ): \mathbb{R} \right].
$$
Let $h$ be a real number such that
$$
h \geq \max \left\lbrace
D \left( \log \left( \frac{b_{1}}{a_{2}}+ \frac{b_{2}}{a_{1}} \right) + \log \lambda +1.75 \right)+0.06,\,
\lambda, \,
\frac{D \log 2}{2} \right\rbrace.
$$
We assume that
$$
a_{1}a_{2} \geq \lambda^{2}.
$$
Put
$$
H= \dfrac{h}{\lambda}+ \dfrac{1}{\sigma}, \ \
\omega=2+ 2\sqrt{1+ \dfrac{1}{4H^{2}} }, \ \
\theta=\sqrt{1+ \dfrac{1}{4H^{2}} }+ \dfrac{1}{2H}.
$$
Then we have
$$
\log \vert \varLambda \vert \geq -C h'^{\,2} a_{1}a_{2}- \sqrt{\omega \theta} h'- \log \left( C' h'^{\,2} a_{1}a_{2} \right)
$$
with
$$
h'=h+ \dfrac{\lambda}{\sigma}, \quad
C=C_0\dfrac{\mu}{\lambda^{3}\sigma}, \quad
C'=\sqrt{ \dfrac{C \sigma \omega \theta}{\lambda^{3} \mu} },
$$
where
$$
C_0=\left( \dfrac{\omega}{6}+ \dfrac{1}{2} \sqrt{\dfrac{\omega^{2}}{9}+ \dfrac{8\lambda \omega^{5/4} \theta^{1/4}}{3 \sqrt{a_{1}a_{2}}H^{1/2} } + \dfrac{4}{3} \left( \dfrac{1}{a_{1}}+ \dfrac{1}{a_{2}} \right) \dfrac{\lambda \omega }{H} } \,\right)^{2}.
$$
\end{lemma}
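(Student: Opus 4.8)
The statement is Laurent's explicit lower bound for a linear form in two logarithms, so a genuine proof means reconstructing the argument of \cite{LAUR} by the \emph{interpolation determinant method}, and the plan is to argue by contradiction: assume $|\varLambda|$ is smaller than the asserted quantity and produce an impossibility by estimating one and the same determinant analytically and arithmetically. First I would fix integer parameters $L$ and $T$ (the number of monomials and the interpolation length) and a radius $R>0$, all to be optimized at the very end, and form the family of entire functions $\varphi_{t}(z)=z^{t_0}\alpha_1^{t_1 z}\alpha_2^{t_2 z}=z^{t_0}\e^{(t_1\log\alpha_1+t_2\log\alpha_2)z}$, indexed by multi-indices $t=(t_0,t_1,t_2)$ running over a suitable box, together with a set of integer sample points $\zeta$. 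The object of study is the square interpolation determinant $\Delta=\det\bigl(\varphi_{t}(\zeta)\bigr)$, with rows indexed by the $t$ and columns by the points $\zeta$. The mechanism is that $\varLambda=b_2\log\alpha_2-b_1\log\alpha_1$ being tiny forces $b_2\log\alpha_2\approx b_1\log\alpha_1$, so the exponential frequencies $t_1\log\alpha_1+t_2\log\alpha_2$ cluster and the rows of $\Delta$ become nearly linearly dependent.

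Next I would prove the analytic upper bound for $|\Delta|$. Viewing $\Delta$ as a determinant of analytic functions and representing it by a contour integral over the circle $|z|=R$, the maximum modulus principle combined with the high vanishing order coming from the $N$ interpolation conditions (here $N$ is the order of the matrix) yields an estimate whose main term is of the shape $-\kappa N^{2}\log R$, i.e. strongly negative. The near-dependence of the rows produced by the smallness of $\varLambda$ contributes an additional correction controlled by $|\varLambda|$. This is precisely the step that converts the smallness of $|\varLambda|$ into smallness of $|\Delta|$, and it is governed by the growth of the $\varphi_t$ on the disc of radius $R$, which in turn is controlled by $\rho|\log\alpha_i|$ and hence by the quantities $a_1,a_2$.

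The arithmetic side comes next. Up to an explicit power of $b_2$ (clearing the denominators hidden in the points), $\Delta$ is an algebraic number lying in $K=\QQ(\alpha_1,\alpha_2)$, of degree bounded through $D$ and of height bounded in terms of $a_1,a_2$ and the parameters $L,T,R$; here $a_i\ge\max\{1,\rho|\log\alpha_i|-\log|\alpha_i|+2D\,\mathrm{h}(\alpha_i)\}$ is exactly the package of $\mathrm{h}(\alpha_i)$ and $|\log\alpha_i|$ needed. The Liouville inequality, i.e. the product formula applied to the nonzero number $\Delta$, then gives a lower bound $\log|\Delta|\ge -c\,N\,(\text{height terms})$. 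For this to obstruct the analytic bound one must know $\Delta\neq0$, and this is where the multiplicative independence of $\alpha_1,\alpha_2$ enters: it guarantees that the frequencies $t_1\log\alpha_1+t_2\log\alpha_2$ are pairwise distinct (a relation $t_1\log\alpha_1+t_2\log\alpha_2=0$ would force $\alpha_1^{t_1}\alpha_2^{t_2}=1$), so the functions $\varphi_t$ are linearly independent and the sample points can be chosen to make the confluent-Vandermonde-type determinant $\Delta$ nonzero. It is exactly this determinantal structure that lets the two-logarithm case avoid a heavy zero estimate.

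Finally I would confront the two estimates. For suitably large $L,T,R$ the analytic upper bound and the arithmetic lower bound for $\log|\Delta|$ are incompatible \emph{unless} $|\varLambda|$ exceeds the claimed threshold, which is the desired conclusion. All the explicit constants are the residue of optimizing this comparison: the free reals $\rho>1$ and $1/3\le\mu\le1$ balance the radius against the height contributions, and the combinations $\sigma=(1+2\mu-\mu^2)/2$, $\lambda=\sigma\log\rho$, $H=h/\lambda+1/\sigma$, together with $\omega$, $\theta$, $h'=h+\lambda/\sigma$, and ultimately $C_0,C,C'$, are exactly the quantities that fall out when the parameters are tuned and the lower-order error terms are collected. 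I expect the main labor to be twofold: the \emph{bookkeeping of this optimization}, needed to reproduce the precise shape of $C_0$, $C'$, $\omega$ and $\theta$ rather than a mere order-of-magnitude bound, and the \emph{non-asymptotic} control of both the Schwarz-lemma estimate and the Liouville bound so that the constants are fully explicit; the nonvanishing of $\Delta$, by contrast, is handled cleanly by the Vandermonde structure once multiplicative independence is in hand. Since the lemma is quoted verbatim from \cite{LAUR}, in the present paper it is simply invoked, but the route above is how it is established.
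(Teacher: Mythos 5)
You have correctly recognized the situation: the paper offers no proof of this lemma at all — it is imported verbatim from Laurent \cite{LAUR} and simply invoked in the proof of Lemma \ref{L_Baker_bounds} — so there is no internal argument to compare yours against. Your blind sketch is an accurate reconstruction of the interpolation-determinant strategy of Laurent's actual proof (analytic upper bound for $\log|\Delta|$ via a Schwarz/maximum-modulus estimate with main term of order $-N^2\log R$, arithmetic lower bound via Liouville, nonvanishing resting on the multiplicative independence of $\alpha_1,\alpha_2$, and the final optimization yielding $\sigma,\lambda,H,\omega,\theta,C_0,C,C'$), with only the minor inaccuracy that in Laurent's argument the interpolation points are the fixed integers $rb_2+sb_1$ and the nonvanishing of $\Delta$ is secured by extracting a nonzero maximal minor of a rectangular matrix rather than by choosing the sample points freely.
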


\subsection{A Baker type estimate}

Let $A = \{2,3,6,7,10,11\}$ and consider equation \eqref{eq:1.5} with $x \in A$.
The following lemma provides sharp upper bounds for the solutions $n,k$ of the equation \eqref{eq:1.5}
and will be used in the proof of Theorem \ref{theo:numerical}.

\begin{lemma} \label{L_Baker_bounds}
Let $A = \{2,3,6,7,10,11\}$ and consider equation
\eqref{eq:1.5} with $x \in A$ in integer unknowns $(k,y,n)$ with $k \ge 83, y \ge 2$ and $n \ge 3$ a prime.
Then for $y > 4x^2$ we have $n \leq n_0$, for $y > 10^6$ even $n \leq n_1$ holds, and for $y \leq 4x^2$ we have $k \leq k_1$,
where $n_0=n_0(x), n_1=n_1(x)$ and $k_1=k_1(x)$ are given in Table \ref{table1}.

\begin{table}[h]
\centering
\begin{tabular}{|c|c|c|c|}
\hline
$x$ & $n_0 \ (y>4x^2)$ & $n_1 \ (y>10^6)$ & $k_1 \ (y \le 4x^2)$     \\\hline
$2$ & $7,500$    &  $3,200$   &  $45,000$\\\hline
$3$ & $21,000$   & $10,000$   &  $120,000$\\\hline
$6$ & $94,000$   & $53,000$   &  $540,000$\\\hline
$7$ & $128,000$  & $74,200$   &  $740,000$\\\hline
$10$& $253,000$  & $157,000$  &  $1,450,000$\\\hline
$11$& $301,000$  & $190,000$  &  $1,750,000$\\\hline
\end{tabular}
\vspace{0.2cm} \caption{Bounding $n$ and $k$ under the indicated conditions} \label{table1}
\end{table}

\end{lemma}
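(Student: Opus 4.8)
The plan is to apply Laurent's estimate (Lemma \ref{laurentlemma}) to the linear form in two logarithms attached to \eqref{eq:1.5}. Using \eqref{eq:1.6} I would first write
\[
T_k(x)=(2x)^k\Bigl(1+\sum_{i=1}^{x-1}\bigl(1-\tfrac{i}{2x}\bigr)^k\Bigr),
\]
and set
\[
\Lambda=n\log y-k\log(2x)=\log\Bigl(1+\sum_{i=1}^{x-1}\bigl(1-\tfrac{i}{2x}\bigr)^k\Bigr).
\]
Since $T_k(x)>(2x)^k$ we have $\Lambda>0$, and from $\log(1+t)\le t$ together with $(1-\tfrac{i}{2x})^k\le(1-\tfrac{1}{2x})^k$ I obtain the decisive upper bound
\[
\log\Lambda\le\log(x-1)-c_x k,\qquad c_x:=\log\tfrac{2x}{2x-1}>0.
\]
Before invoking Lemma \ref{laurentlemma} I must check that $\alpha_1=2x$ and $\alpha_2=y$ are multiplicatively independent. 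If they were dependent, then, both being integers $\ge 2$, we could write $2x=g^u$ and $y=g^v$ for an integer $g\ge 2$, so that $\Lambda=(nv-ku)\log g$ is an integer multiple of $\log g$; as $\Lambda\ne 0$ this forces $\Lambda\ge\log 2$, whereas the displayed formula for $\Lambda$ gives $\Lambda<\log 2$ for every $x\in A$ once $k\ge 83$ — which is exactly why the hypothesis $k\ge83$ is imposed. Hence $\alpha_1,\alpha_2$ are multiplicatively independent.

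Next I would feed this into Lemma \ref{laurentlemma} with $b_1=k$, $b_2=n$, noting $D=1$, ${\rm h}(2x)=\log(2x)$ and ${\rm h}(y)=\log y$. This produces a lower bound of the shape $\log\Lambda\ge -C\,h'^{\,2}a_1a_2-\cdots$, in which $a_1$ is of size $\log(2x)$ (a constant, since $x$ is fixed) and $a_2$ is of size $\log y$. The crucial point is the size of $h$, which is governed by $\log\bigl(b_1/a_2+b_2/a_1\bigr)$. Here I exploit the near-equality $k\log(2x)=n\log y-\Lambda$: it gives $k/\log y=n/\log(2x)-\Lambda/(\log(2x)\log y)$, so that $b_1/a_2$ is comparable to $b_2/a_1$, both of size $n/\log(2x)$. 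Consequently $b_1/a_2+b_2/a_1\le C(x)\,n$ and $h\le C'(x)\log n$ — dividing $k$ by $a_2\approx\log y$ replaces $\log k$ by $\log n$, and this is the gain that makes the argument close.

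Combining the lower bound of Lemma \ref{laurentlemma} with the upper bound above yields, for a constant $C(x)$ depending only on $x$,
\[
c_x k-\log(x-1)\le C(x)\,(\log n)^2\,\log y .
\]
Substituting $k=(n\log y-\Lambda)/\log(2x)$ and dividing through by $\log y$ cancels the factor $\log y$, leaving an inequality of the form $n\le C''(x)(\log n)^2$, which bounds $n$ absolutely. When $y>4x^2$, i.e. $\log y>2\log(2x)$, one checks $k>n$ while the quantity $b_1/a_2+b_2/a_1$ remains governed by $n$, and optimizing the free parameters $\rho,\mu$ of Lemma \ref{laurentlemma} gives the explicit value $n\le n_0(x)$ of Table \ref{table1}; strengthening the hypothesis to $y>10^6$ enlarges $a_2$ and shrinks $h$, producing the sharper $n\le n_1(x)$. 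In the remaining range $y\le 4x^2$ one has $\log y\le 2\log(2x)$, so $k$ and $n$ are comparable ($k/2<n\le k\log(2x)/\log 2+1$); now $\log y$ is bounded and the same combined inequality reads $c_x k\le C(x)(\log k)^2$, which gives the absolute bound $k\le k_1(x)$.

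The conceptual scaffolding above is routine; the real work, and the principal obstacle, is the explicit numerical optimization of $\rho$ and $\mu$ in Lemma \ref{laurentlemma}, carried out separately for each $x\in A$ and for each of the two height thresholds, so as to reproduce exactly the constants of Table \ref{table1}. Care is also needed in justifying the comparability relations that replace $\log k$ by $\log n$ and pin down which of $k,n$ is larger in each regime, so that the cancellation of $\log y$ is legitimate, and in confirming that $k\ge 83$ suffices to rule out multiplicative dependence uniformly over $A$.
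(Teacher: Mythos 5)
Your scaffolding is the right one (Laurent's two-logarithm estimate played against the elementary bound $\log\Lambda\le\log(x-1)-k\log\frac{2x}{2x-1}$, a case split on the size of $y$, and multiplicative independence --- your independence argument via powers of a common integer is valid, though the paper gets it more cheaply from the fact that $T_k(x)$ is odd for every $x\in A$). But the step that actually produces the numbers in Table \ref{table1} is missing. The paper does \emph{not} apply Lemma \ref{laurentlemma} to $\Lambda=n\log y-k\log(2x)$ with $\alpha_1=y$. It first performs a division with remainder: since $y>4x^2$ forces $k\ge 2n$, one writes $k=Bn+r$ with $B\ge1$, $0<|r|\le\frac{n-1}{2}$ (a separate argument, via the factorization of $y^n-(2x)^{Bn}$, rules out $r=0$), and applies the lemma to the form $|r|\log(2x)-n\log\alpha_1$ with $\alpha_1=\bigl(y/(2x)^B\bigr)^{\pm1}$, $b_1=n$, $b_2=|r|$. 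The point of this reduction is that $\alpha_1$ has height about $\log y$ but \emph{tiny} logarithm, $|\log\alpha_1|\lesssim\tfrac12\log(2x)$, so the quantity $\rho|\log\alpha_1|-\log|\alpha_1|+2D{\rm h}(\alpha_1)$ can be bounded by $\tfrac{\rho+1}{2}\log(2x)+2\log y$ (this is \eqref{a1_upper}): the coefficient of $\log y$ is $2$. In your setup, Laurent's hypotheses force $a_1\ge(\rho+1)\log y$, i.e.\ coefficient $\rho+1$.

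This loss is not cosmetic. Carrying your computation to the end, the main term of your bound is $n\lesssim C(\rho+1)^2(\log 2x)^2(\log n)^2/\log\frac{2x}{2x-1}$, in which --- as you yourself note --- $\log y$ cancels \emph{identically}. Two consequences. First, even after optimizing $\rho,\mu$ (the optimum of $(\rho+1)^2/(\log\rho)^3$ is about $8.5$, against roughly $2(\rho+1)/(\log\rho)^3\approx 1.8$ attainable with the paper's $a_1$ when $y$ is large), your constant exceeds the paper's by a factor between roughly $2$ and $5$, so you get bounds that are multiples of the stated $n_0,n_1,k_1$; since the content of the lemma \emph{is} Table \ref{table1}, this does not prove it. Second, and decisively: because $y$ cancels, your inequality for $n$ is exactly the same under $y>10^6$ as under $y>4x^2$, so your argument cannot produce any sharper column $n_1<n_0$ at all, whereas in the paper this improvement comes precisely from the ratio $a_1/\log y=2+\tfrac{\rho+1}{2}\log(2x)/\log y$ decreasing as $y$ grows (and the $n_1$ column is the one the authors actually needed to finish the case $x=11$). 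The same reduction with the roles of $k$ and $n$ interchanged, as in \eqref{n_with_B_and_k}, underlies the $k_1$ column for $y\le 4x^2$. So the missing idea is the step $k=Bn+r$ (resp.\ $n=Bk+r$) with $0<|r|\le\frac{n-1}{2}$ (resp.\ $\le\lfloor k/2\rfloor$), together with the proof that $r\ne0$; without it, the explicit numerical claims of the lemma are out of reach.
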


\begin{proof} In the course of the proof we will always assume that $x \in A$ and we distinguish three cases according to $y>4x^2$, $y>10^6$ or $y \le 4x^2$.

\vskip.2cm\noindent {\it Case I. $y >4x^2$ }

\vskip.2cm\noindent
We may suppose, without loss of generality, that $n$ is large enough, that is
\begin{equation} \label{n_lower1}
n>n_0.
\end{equation}

\noindent Further, by $k \ge 83$ we easily deduce that for every $x \in A$ we have
\begin{equation} \label{skx_upper_1}
(x+1)^k+(x+2)^k+ \cdots +(2x)^k<2 \cdot (2x)^k,
\end{equation}
and
\begin{equation} \label{skx_upper_2}
(x+1)^k+(x+2)^k+ \cdots +(2x-1)^k<2 \cdot (2x-1)^k.
\end{equation}

\noindent Since $y>4x^2$ by \eqref{eq:1.5}, \eqref{skx_upper_1} and $x \ge 2$ we get that
\begin{equation} \label{B_ge_1}
k \ge 2n.
\end{equation}
\noindent Using \eqref{B_ge_1} and the fact that $n$ is odd we may write $k$ in the form
\begin{equation} \label{k_with_B_and_n}
k=Bn+r \ \textrm{with} \ B \ge 1, \ 0 \le \vert r \vert \le \frac{n-1}{2}.
\end{equation}
\noindent We show that in \eqref{k_with_B_and_n} we have $r \neq 0$. On the contrary, suppose $r=0$. Then, using \eqref{eq:1.5} and \eqref{skx_upper_2}
we infer by \eqref{k_with_B_and_n} that
\begin{align*}
2(2x-1)^{k}&>(x+1)^k+(x+2)^k+\cdots +(2x-1)^k=y^n-(2x)^k=y^n-(2x)^{Bn}\\
&=(y-(2x)^B)(y^{n-1}+\cdots + (2x)^{B(n-1)})  \ge (2x)^{B(n-1)}.
\end{align*}
Hence
$$
n< \frac{\log (2x)}{\log \left( \frac{2x}{2x-1} \right)}+ \frac{\log 2}{B\log \left( \frac{2x}{2x-1} \right)}.
$$
This together with $x \le 11$  and $B \ge 1$ implies $n<82$, which contradicts \eqref{n_lower1}. Thus, $r \neq 0$.

\noindent On dividing equation \eqref{eq:1.5}, by $y^n$ we obviously get
\begin{equation}\label{divide1}
1-\frac{(2x)^k}{y^n}=\frac{s}{y^n},
\end{equation}
\noindent where $s=(x+1)^k+\ldots+(2x-1)^k$. Using \eqref{k_with_B_and_n} and \eqref{divide1} we infer that
\begin{equation}\label{divide2}
\left\vert (2x)^r \cdot \left(\frac{(2x)^B}{y}\right)^n-1 \right\vert=\frac{s}{y^n}.
\end{equation}
Put
\begin{equation}\label{lambda}
\varLambda_r =
\begin{cases}
\,\ \, r \log (2x)-n \log{ \frac{y}{(2x)^B}} & \text{if $r>0$},\\
\,|r| \log (2x)-n \log{ \frac{(2x)^B}{y}} & \text{if $r<0$}.
\end{cases}
\end{equation}
\noindent In what follows we find upper and lower bounds for $\log\vert\varLambda_r\vert$. We distinguish two subcases according to
$$1-\frac{(2x)^k}{y^n} \ge 0.795 \ \textrm{or} \ 1-\frac{(2x)^k}{y^n}<0.795, $$
respectively. If $1-\frac{(2x)^k}{y^n} \ge 0.795$ then by \eqref{eq:1.5} and \eqref{skx_upper_1} we immediately obtain a contradiction,
so we may assume that the latter case holds.

\noindent It is well known (see Lemma B.2 of \cite{smart}) that for every $z \in \mathbb{R}$ with $\vert z-1 \vert < 0.795$ one has
\begin{equation} \label{calculus}
\vert \log{z} \vert <2\vert z-1 \vert.
\end{equation}
\noindent On applying inequality \eqref{calculus} with $z=(2x)^k/y^n$ we get by \eqref{divide1}, \eqref{divide2}, \eqref{lambda} and $(2x)^k \ne y^n$ that
\begin{equation}\label{lambda_upper_1}
\vert \varLambda_r \vert<\frac{2s}{y^n}.
\end{equation}
\noindent Observe that \eqref{eq:1.5} implies
\begin{equation}\label{k_with_nyx}
k<\frac{n\log{y}}{\log{2x}}.
\end{equation}
\noindent Thus by \eqref{lambda_upper_1}, \eqref{skx_upper_2} and \eqref{k_with_nyx} we infer that

\begin{align}\label{lambda_upper}
\log |\varLambda_r|
<-\frac{\log \bigr(\frac{2x}{2x-1}\bigr)}{\log(2x)} (\log y)\,n+\log 4.
\end{align}

Next, for a lower bound for $\log |\varLambda_r|$, we shall use Lemma \ref{laurentlemma} with
$$
(\alpha_1, \alpha_2, b_1,b_2)=
\begin{cases}
\ \left(\frac{y}{(2x)^B},2x,n,r\right) & \text{if $r>0$},\\
\ \left(\frac{(2x)^B}{y},2x,n,|r|\right) & \text{if $r<0$}.
\end{cases}
$$
Using \eqref{eq:1.5} and \eqref{skx_upper_1} one can easily check that $\alpha_1>1$ and $\alpha_2>1$. We show that $\alpha_1, \alpha_2$ are multiplicatively independent. Assume the contrary. Then the set of prime factors of $y$ coincides with that of $2x$.
This implies that $y$ must be even. But for $x \in A$ we easily see that $y$ is odd, which is a contradiction, proving that
$\alpha_1$ and $\alpha_2$ are multiplicatively independent.

\noindent Now, we apply Lemma \ref{laurentlemma} for every $x \in A$ with
\begin{equation}\label{rho}
(\rho,\mu)=(7.7,0.57).
\end{equation}
\noindent In what follows we shall derive upper bounds for the quantities
\begin{equation*}
\rho \vert \log \alpha_{i} \vert - \log \vert \alpha_{i} \vert + 2D{\rm h}(\alpha_{i}), \quad (i=1,2)
\end{equation*}
occurring in Lemma \ref{laurentlemma}. Since $D=1$ and $\alpha_2>1$, for $i=2$ we get
\begin{equation} \label{a2_upper}
\rho \vert \log \alpha_{2} \vert - \log \vert \alpha_{2} \vert + 2D{\rm h}(\alpha_{2})=(\rho+1)\log{2x}.
\end{equation}
\noindent For $i=1$ we obtain
\begin{equation} \label{a1_upper}
\rho \vert \log \alpha_{1} \vert - \log \vert \alpha_{1} \vert + 2D{\rm h}(\alpha_{1})<\frac{\rho+1}{2}\log{2x}+2\log{y},
\end{equation}
To verify that \eqref{a1_upper} is valid we shall estimate $\log \alpha_1$ and ${\rm h}(\alpha_1)$ from above, by using equation \eqref{eq:1.5}, i.e. $s+(2x)^{Bn+r}=y^n$.
Observe
$$
{\rm h}(\alpha_1) ={\rm h}\left(\frac{(2x)^{B}}{y}\right) \le  \log \max\{(2x)^B,y \}
=\begin{cases}
\log y & \text{if $r>0$},\\
\log (2x)^B & \text{if $r<0$}.
\end{cases}
$$
If $r>0$, then
$$
\alpha_1^n=\left(\frac{y}{(2x)^{B}}\right)^{n}=(2x)^{r}+\frac{s}{(2x)^{Bn}}=(2x)^{r}\left(1+\frac{s}{(2x)^{k}}\right)<2(2x)^{r} \ (\text{as $s<(2x)^k$}),
$$
so
$$
\log \alpha_1<\frac{\log 2}{n}+\frac{r}{n}\,\log (2x) \le \frac{\log 2}{n}+\frac{n-1}{2n}\,\log (2x),
$$
whence
$$
\rho \vert \log \alpha_{1} \vert - \log \vert \alpha_{1} \vert + 2D{\rm h}(\alpha_{1})<
$$
$$
<\left(\frac{\log 2}{n\log (2x)}+\frac{n-1}{2n}\right)(\rho-1)\,\log (2x)+2\log y \quad
$$
which by \eqref{rho} and $x \ge 2$ clearly implies \eqref{a1_upper}.

\noindent If $r<0$, then
$$
\alpha_1^n=\left(\frac{(2x)^{B}}{y}\right)^{n}=(2x)^{-r}\left(1-\frac{s}{y^{n}}\right)<(2x)^{-r}=(2x)^{|r|},
$$
so
$$
\log \alpha_1<\frac{|r|}{n}\,\log (2x) \le \frac{n-1}{2n}\,\log (2x), $$
and
$$
\log (2x)^B=\log \alpha_1+\log y<\frac{n-1}{2n}\,\log (2x)+\log y,
$$
and we get
$$
\rho \vert \log \alpha_{1} \vert - \log \vert \alpha_{1} \vert + 2D{\rm h}(\alpha_{1}) <
$$
$$
<\left(\frac{n-1}{2n}\,(\rho-1)+\frac{n-1}{n} \right)\,\log (2x)+2\log y,
$$
which by \eqref{n_lower1} again implies \eqref{a1_upper}.

\noindent In view of \eqref{a2_upper} we can obviously take for every $x \in A$
\begin{equation}\label{a2}
a_2=(\rho+1)\log(2x),
\end{equation}
while for the values $a_1$ we use the upper bound occurring in \eqref{a1_upper}.
Namely, we can take $a_1$ as
\begin{equation} \label{a1}
a_1=\frac{\rho+1}{2}\log(2x)+2\log{y} \quad \textrm{if} \quad x \in A
\end{equation}

\noindent Since $\mu=0.57$ we get
\begin{equation} \label{sigma_lam}
\sigma=0.90755 \ \textrm{and} \ \lambda=0.90755\log{\rho},
\end{equation}
whence by \eqref{rho}, \eqref{a2}, \eqref{a1}, \eqref{sigma_lam} and $y>4x^2$ we easily check that for every $x \in A$
$$
a_1a_2>\lambda^2
$$
holds. Now, we are going to derive an upper bound $h$ for the quantity
$$
\max \left\lbrace
D \left( \log \left( \frac{b_{1}}{a_{2}}+ \frac{b_{2}}{a_{1}} \right) + \log \lambda +1.75 \right)+0.06,\,
\lambda, \,
\frac{D \log 2}{2} \right\rbrace.
$$
Using $D=1$, \eqref{rho}, \eqref{a2}, \eqref{a1}, \eqref{sigma_lam} and $y>4x^2$,
for the values of $h$ occurring in Lemma \ref{laurentlemma} we obtain $h=\log{n}+\varepsilon$, with $\varepsilon=\varepsilon(x)$ given in Table \ref{table2}.



\begin{table}[h]
\centering
\begin{tabular}{|c|c|c|c|c|c|c|}
\hline
$x$ & $2$ & $3$ & $6$ & $7$ & $10$ & $11$ \\\hline
$\varepsilon$ & $0.3560$ & $0.0995$ & $-0.2275$ & $-0.2877$ & $-0.4144$ & $-0.4458$ \\\hline\hline
\end{tabular}
\vspace{0.2cm} \caption{Choosing the parameter $h=\log{n}+\varepsilon$ occurring in Lemma \ref{laurentlemma} if the case $y > 4x^2$} \label{table2}
\end{table}

\noindent Further, by \eqref{n_lower1} we easily check that for the above values of $h$ assumptions of
Lemma \ref{laurentlemma} concerning the parameter $h$ are satisfied. Using \eqref{n_lower1} again we obtain a lower bound for
$H$ and hence upper bounds for $\omega$ and $\theta$. Moreover, using these values of $\omega$ and $\theta$ by \eqref{rho}, \eqref{a2}, \eqref{a1}, \eqref{sigma_lam} and $y>4x^2$ for $x \in A$ we obtain Table \ref{table3}.

\begin{table}[h]
\centering
\begin{tabular}{|c|c|c|c|c|c|c|c|}
\hline
$x$ & $H$ & $\omega$ & $\theta$ & $C_0$ & $C$ & $C'$ & $h'$ \\\hline
2 & 6.11  & 4.0067   &  1.0852   &  2.3688  &  0.2341  &0.51 & $\log{n}$+2.3974   \\\hline
3 & 6.52  & 4.0059   &  1.0797   &  2.2241  &  0.2198  &0.50 & $\log{n}$+2.1409   \\\hline
6 & 7.16  & 4.0049   &  1.0723   &  2.0867  &  0.2062  &0.50 & $\log{n}$+1.8139   \\\hline
7 & 7.29  & 4.0047   &  1.0710   &  2.0662  &  0.2042  &0.50 & $\log{n}$+1.7537   \\\hline
10& 7.59  & 4.0044   &  1.0681   &  2.0271  &  0.2003  &0.50 & $\log{n}$+1.6270   \\\hline
11& 7.67  & 4.0043   &  1.0674   &  2.0182  &  0.1994  &0.50 & $\log{n}$+1.5956   \\\hline
\end{tabular}
\vspace{0.2cm} \caption{Lower bounds for $H$ and upper bounds for $\omega,\theta,C_0,C,C',h'$ occurring in Lemma \ref{laurentlemma} if $y>4x^2$} \label{table3}
\end{table}
By Lemma \ref{laurentlemma} we obtain
\begin{equation} \label{lambda_lower}
\log\vert \varLambda_r\vert>-Ch'^2a_1a_2-\sqrt{\omega\theta}h'-\log(C'h'^2a_1a_2),
\end{equation}
whence on comparing \eqref{lambda_upper} with \eqref{lambda_lower} we get
\begin{equation} \label{lambda_bound}
n<\left(\frac{Ch'^2a_1a_2}{\log{y}}+\frac{\sqrt{\omega\theta}}{\log{y}}h'+\frac{\log(C'h'^2a_1a_2)}{\log{y}}+\frac{\log{4}}{\log{y}}\right)\frac{\log{2x}}{\log{\frac{2x}{2x-1}}}.
\end{equation}
Finally, using \eqref{a2}, \eqref{a1} and $y>4x^2$ for $x \in A$, by Table \ref{table3} we easily see that inequality \eqref{lambda_bound} contradicts  \eqref{n_lower1}, proving the desired bounds for $n$ in this case.

\vskip.2cm\noindent {\it Case II. $y >10^6$ }

\vskip.2cm\noindent We work as in the previous case. Namely, we apply Lemma \ref{laurentlemma} again, the only difference is that in this case for $y$ we may write $y>10^6$. We may suppose, without loss of generality, that $n$ is large enough, that is
\begin{equation} \label{n_lower}
n>n_1.
\end{equation}

\noindent Further, we choose $\mu=0.57$ uniformly, and set
\begin{equation} \label{rho_y_10_to_6}
\rho=\begin{cases}
9.6&\mbox{if} \quad x=2,3,6,7 \\
9.3&\mbox{if} \quad x=10,11. \\
\end{cases}
\end{equation}

\noindent As before, we may take $a_1$ and $a_2$ as in \eqref{a1} and \eqref{a2}.




\noindent Thus by \eqref{rho_y_10_to_6}, \eqref{a1}, \eqref{a2} and $y>10^6$ for the values of $h$ occurring in Lemma \ref{laurentlemma} we obtain $h=\log{n}+\varepsilon$, with $\varepsilon=\varepsilon(x)$ given in Table \ref{table4}.

\begin{table}[h]
\centering
\begin{tabular}{|c|c|c|c|c|c|c|}
\hline
$x$ & $2$ & $3$ & $6$ & $7$ & $10$ & $11$ \\\hline
$\varepsilon$ & $0.0324$ & $-0.1870$ & $-0.4620$ & $-0.5122$ & $-0.6079$ & $-0.6339$ \\\hline\hline
\end{tabular}
\vspace{0.2cm} \caption{Choosing the parameter $h=\log{n}+\varepsilon$ occurring in Lemma \ref{laurentlemma} if $y > 10^6$} \label{table4}
\end{table}



\noindent On combining \eqref{a2}, \eqref{a1}, \eqref{n_lower}, \eqref{rho_y_10_to_6} with $y>10^6$ and with Table \ref{table4}
we obtain Table \ref{table5}.
\newpage
\begin{table}[h]
\centering
\begin{tabular}{|c|c|c|c|c|c|c|c|}
\hline
$x$ & $H$ & $\omega$ & $\theta$ & $C_0$ & $C$ & $C'$ & $h'$ \\\hline
2 & 5.04  & 4.0099   &  1.1042   &  2.1846  &  0.1587  & 0.36 & $\log{n}$+2.2943  \\\hline
3 & 5.49  & 4.0083   &  1.0953   &  2.1067  &  0.1531  & 0.36 & $\log{n}$+2.0749  \\\hline
6 & 6.17  & 4.0066   &  1.0844   &  2.0259  &  0.1472  & 0.36 & $\log{n}$+1.7999  \\\hline
7 & 6.31  & 4.0063   &  1.0824   &  2.0130  &  0.1463  & 0.36 & $\log{n}$+1.7497  \\\hline
10& 6.71  & 4.0056   &  1.0773   &  1.9871  &  0.1506  & 0.36 & $\log{n}$+1.6222  \\\hline
11& 6.79  & 4.0055   &  1.0764   &  1.9813  &  0.1502  & 0.36 & $\log{n}$+1.5962  \\\hline
\end{tabular}
\vspace{0.2cm} \caption{Lower bounds for $H$ and upper bounds for $\omega,\theta,C_0,C,C',h'$ occurring in Lemma \ref{laurentlemma} if $y >10^6$} \label{table5}
\end{table}

By Lemma \ref{laurentlemma} we obtain
\begin{equation} \label{lambda_lower_y_10_to_6}
\log\vert \Lambda_r\vert>-Ch'^2a_1a_2-\sqrt{\omega\theta}h'-\log(C'h'^2a_1a_2),
\end{equation}
whence on comparing \eqref{lambda_upper} with \eqref{lambda_lower_y_10_to_6} we obtain
\begin{equation} \label{lambda_bound_y>10-6}
n<\left(\frac{Ch'^2a_1a_2}{\log{y}}+\frac{\sqrt{\omega\theta}}{\log{y}}h'+\frac{\log(C'h'^2a_1a_2)}{\log{y}}+\frac{\log{4}}{\log{y}} \right)\frac{\log{2x}}{\log{\frac{2x}{2x-1}}}.
\end{equation}
Finally, using \eqref{a2}, \eqref{a1} and $y>10^6$, by Table \ref{table5} we see that \eqref{lambda_bound_y>10-6} contradicts \eqref{n_lower}, proving the validity of the desired bounds for $n$ in this case.

\vskip.2cm\noindent {\it Case III. $y \le 4x^2$ }

\vskip.2cm\noindent In order to obtain the desired upper bounds for $k$ we may clearly assume that $k$ is large, namely
\begin{equation} \label{k_lower}
k>k_1.
\end{equation}


\noindent Since $y \le 4x^2$ we have by \eqref{eq:1.5} that
\begin{equation} \label{n_lower_k}
n>\lfloor k/2 \rfloor.
\end{equation}

\noindent Hence by \eqref{n_lower_k}, we can write
\begin{equation} \label{n_with_B_and_k}
n=Bk+r \quad \text{with} \quad B \ge 1, \ 0 \le |r| \le \left\lfloor \frac{k}{2} \right\rfloor.
\end{equation}

\noindent Further, using the same argument as in Case I, by $x \in A$ and $k \ge 83$ we may suppose that in \eqref{n_with_B_and_k} we have
$r \ne 0$.


\noindent We divide our equation \eqref{eq:1.5} by $(2x)^k$. Then, by \eqref{n_with_B_and_k} we infer
\begin{equation} \label{divide3}
y^{r}\left( \frac{y^{B}}{2x} \right)^{k}-1=\frac{s}{x^k},
\end{equation}
where $s=(x+1)^k+2^k+\cdots +(2x-1)^k$. Thus, $y^{r}\left( \frac{y^{B}}{2x} \right)^{k}>1$.
Put
\begin{equation} \label{lambda_k}
\varLambda_{r}=b_{2}\log\alpha_{2}-b_{1}\log \alpha_{1},
\end{equation}
\noindent where
\begin{equation} \label{lambda_k_choose}
(\alpha_1, \alpha_2, b_1,b_2)=
\begin{cases}
\left(\frac{2x}{y^B},y,k,r\right) & \text{if $r>0$},\\
\left(\frac{y^B}{2x},y,k,|r|\right) & \text{if $r<0$}.
\end{cases}
\end{equation}
It is easy to see $\alpha_1>1$ and $\alpha_2>1$, moreover similarly to {Case I} we obtain that $\alpha_1$ and $\alpha_2$ are multiplicatively independent.
We find upper and lower bounds for $\log |\varLambda_{r}|$. Since for every $z \in \mathbb{R}$ with $z>1$ we have $\vert \log{z}\vert<\vert z-1\vert$ it follows by \eqref{divide3}, \eqref{lambda_k}, \eqref{lambda_k_choose} and \eqref{skx_upper_2} that



\begin{equation}\label{lambda_upper_k}
\log |\varLambda_r|<-k\,\log \left(\frac{2x}{2x-1}\right) +\log 2.
\end{equation}
For a lower bound, we again use Lemma \ref{laurentlemma}. We choose $\mu=0.57$ uniformly, and we set for every $x \in A$
\begin{equation} \label{rho_k}
\rho=6.2.
\end{equation}

\noindent Moreover, using the same argument as in {Case I} by $y \le 4x^2$ we may take
\begin{equation} \label{a1_k}
a_1=1.02 \cdot (\rho+3)\log(2x),
\end{equation}
and
\begin{equation} \label{a2_k}
a_2=2\cdot(\rho+1)\log{2x}.
\end{equation}

\noindent Since $\mu=0.57$ we get
\begin{equation} \label{sigma_lam_k}
\sigma=0.90755 \ \textrm{and} \ \lambda=0.90755\log{\rho},
\end{equation}
whence by \eqref{rho_k}, \eqref{a1_k}, \eqref{a2_k},  \eqref{sigma_lam_k} we easily check that for every $x \in A$
$$
a_1a_2>\lambda^2
$$
holds. Now, we are going to derive an upper bound $h$ for the quantity
$$
\max \left\lbrace
D \left( \log \left( \frac{b_{1}}{a_{2}}+ \frac{b_{2}}{a_{1}} \right) + \log \lambda +1.75 \right)+0.06,\,
\lambda, \,
\frac{D \log 2}{2} \right\rbrace.
$$

\noindent Using  \eqref{rho_k}, \eqref{a1_k}, \eqref{a2_k}, \eqref{sigma_lam_k} for $h$ occurring in Lemma \ref{laurentlemma} we obtain $h=\log{n}+\varepsilon$, with $\varepsilon=\varepsilon(x)$ given in Table \ref{table6}.

\begin{table}[h]
\centering
\begin{tabular}{|c|c|c|c|c|c|c|}
\hline
$x$ & $2$ & $3$ & $6$ & $7$ & $10$ & $11$ \\\hline
$\varepsilon$ & $-0.1099$ & $-0.3665$ & $-0.6935$ & $-0.7537$ & $-0.8805$ & $-0.9118$ \\\hline
\end{tabular}
\vspace{0.2cm} \caption{Choosing the parameter $h=\log{n}+\varepsilon$ occurring in Lemma \ref{laurentlemma} if the case $y \le 4x^2$} \label{table6}
\end{table}


\noindent On combining \eqref{k_lower}, \eqref{rho_k}, \eqref{a1_k}, \eqref{a2_k}, \eqref{sigma_lam_k} with Table \ref{table6}
we obtain Table \ref{table7}.

\begin{table}[h]
\centering
\begin{tabular}{|c|c|c|c|c|c|c|c|}
\hline
$x$ & $H$ & $\omega$ & $\theta$ & $C_0$ & $C$ & $C'$ & $h'$ \\\hline
2 & 7.50  & 4.0045   &  1.0689   &  2.1294  &  0.2947  & 0.67& $\log{k}$+1.7145  \\\hline
3 & 7.94  & 4.0040   &  1.0650   &  2.0435  &  0.2828  & 0.67& $\log{k}$+1.4580  \\\hline
6 & 8.65  & 4.0034   &  1.0595   &  1.9620  &  0.2715  & 0.67& $\log{k}$+1.1309  \\\hline
7 & 8.80  & 4.0033   &  1.0585   &  1.9498  &  0.2698  & 0.67& $\log{k}$+1.0708  \\\hline
10& 9.13  & 4.0030   &  1.0563   &  1.9265  &  0.2666  & 0.67& $\log{k}$+0.9440  \\\hline
11& 9.23  & 4.0030   &  1.0557   &  1.9212  &  0.2659  & 0.67& $\log{k}$+0.9127  \\\hline
\end{tabular}
\vspace{0.2cm} \caption{Lower bounds for $H$ and upper bounds for $\omega,\theta,C_0,C,C',h'$ occurring in Lemma \ref{laurentlemma} if $y \le 4x^2$} \label{table7}
\end{table}

Further, on using Table \ref{table7} and Lemma \ref{laurentlemma} we obtain
\begin{equation} \label{lambda_lower_k}
\log\vert \Lambda_r\vert>-Ch'^2a_1a_2-\sqrt{\omega\theta}h'-\log(C'h'^2a_1a_2),
\end{equation}
whence, on comparing \eqref{lambda_upper_k} with \eqref{lambda_lower_k} we get
\begin{equation*}
k<\frac{Ch'^2a_1a_2+\sqrt{\omega\theta}h'+\log(2C'h'^2a_1a_2)}{\log \left(\frac{2x}{2x-1}\right)}.
\end{equation*}
Finally, using \eqref{rho_k}, \eqref{a1_k}, \eqref{a2_k}, by Table \ref{table7} we obtain the desired bounds for $k$ in this case.
Thus our lemma is proved.

\end{proof}

\section{Formulas for $v_{2}(T_{k}(x)), v_{3}(T_{k}(x))$} \label{sec:4}
For the proofs of our main results, we will need formulas for $v_{2}(T_{k}(x))$ and $v_{3}(T_{k}(x))$. The heart of the proof of Lemma \ref{lem:4.2} is the following lemma
\begin{lemma}\label{lem:4.1}
For $q,k,t\geq1 $ and $q\equiv 1 \pmod{2}$, we have
\begin{equation*}
v_{2}(T_{k}(2^tq))= \left\{ \def\arraystretch{1.2}
\begin{array}{@{}c@{\quad}l@{}}
t-1, & \text{if $k=1$ or $k$ is even,}\\
2t-2, & \text{if $k\geq 3$ is odd.}\\
\end{array} \right.
\end{equation*}
\end{lemma}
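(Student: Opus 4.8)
The plan is to reduce everything to the $2$-adic valuation of power sums of \emph{odd} integers and then run an induction on $t$; throughout write $x=2^{t}q$ with $q$ odd. The case $k=1$ is immediate from Lemma~\ref{lem:3.2}: since $T_{1}(x)=x(3x+1)/2$ and $3x+1$ is odd, $v_{2}(T_{1}(2^{t}q))=v_{2}(2^{t}q)+v_{2}(3x+1)-v_{2}(2)=t-1$, so I may assume $k\ge 2$. The engine of the proof is then the splitting of $T_{k}(x)=\sum_{j=x+1}^{2x}j^{k}$ according to the parity of $j$: the even terms $j=2m$ with $m=x/2+1,\dots,x$ contribute $2^{k}\big(S_{k}(x)-S_{k}(x/2)\big)=2^{k}T_{k}(x/2)$, giving
\[
T_{k}(2^{t}q)=2^{k}\,T_{k}(2^{t-1}q)+U,\qquad U:=\sum_{\substack{x<j<2x\\ j\ \mathrm{odd}}}j^{k}.
\]
I must therefore pin down $v_{2}(U)$ exactly and then check that $2^{k}T_{k}(2^{t-1}q)$ has strictly larger valuation, after which the formula follows by induction on $t$. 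The base case $t=1$ is handled directly: among the $2q$ integers in $(2q,4q]$ exactly $q$ are odd, and $q$ is odd, so $T_{k}(2q)$ is odd and $v_{2}=0$, matching both $t-1$ and $2t-2$.

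The core auxiliary fact is that for every even exponent $i\ge 0$ the odd power sum $P_{i}:=\sum_{\ell\ \mathrm{odd},\,1\le\ell\le x-1}\ell^{i}$ satisfies $P_{i}\equiv 2^{t-1}\pmod{2^{t}}$, hence $v_{2}(P_{i})=t-1$. To prove it I would group the odd $\ell$ into residue classes modulo $2^{t}$ (each odd residue occurs exactly $q$ times, in the spirit of Lemma~\ref{lem:3.3}), reducing to $Q_{t}:=\sum_{r\ \mathrm{odd},\,1\le r\le 2^{t}-1}r^{i}$, and then show $Q_{t}\equiv 2^{t-1}\pmod{2^{t}}$ by induction on $t$: splitting the odd residues into those below and above $2^{t-1}$ and using $(2^{t-1}+s)^{i}\equiv s^{i}\pmod{2^{t}}$ (where $i$ even kills the cross term $i\,2^{t-1}s^{i-1}$) yields $Q_{t}\equiv 2Q_{t-1}\pmod{2^{t}}$.

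With this in hand the parity cases separate. For even $k$, reducing $j\equiv j-x\pmod{2^{t}}$ shows $U\equiv P_{k}\equiv 2^{t-1}\pmod{2^{t}}$, so $v_{2}(U)=t-1$; since the inductive hypothesis gives $v_{2}\big(2^{k}T_{k}(2^{t-1}q)\big)=k+(t-2)\ge t$, we conclude $v_{2}(T_{k}(2^{t}q))=t-1$.

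For odd $k\ge 3$ a single congruence modulo $2^{t}$ is \emph{not} enough, because the leading terms cancel; controlling this cancellation to the exact order is the main obstacle. Here I would instead pair $j\leftrightarrow 3x-j$ (both odd, symmetric about $3x/2=3\cdot 2^{t-1}q$, which is even for $t\ge 2$) and expand binomially,
\[
U=2\sum_{\substack{0\le s\le k-1\\ s\ \mathrm{even}}}\binom{k}{s}\Big(\tfrac{3x}{2}\Big)^{k-s}R_{s},\qquad R_{s}=\sum_{\substack{d\ \mathrm{odd}\\1\le d\le 2^{t-1}q-1}}d^{s}.
\]
Since $v_{2}(3x/2)=t-1$ and, by the auxiliary claim applied with $t-1$, $v_{2}(R_{s})=t-2$, the $s$-th term has valuation $1+v_{2}\binom{k}{s}+(t-1)(k-s)+(t-2)$, which for $t\ge 2$ is \emph{uniquely} minimized at $s=k-1$ with value $2t-2$; hence $v_{2}(U)=2t-2$, and as $v_{2}\big(2^{k}T_{k}(2^{t-1}q)\big)=k+(2t-4)\ge 2t-1$ by induction, $v_{2}(T_{k}(2^{t}q))=2t-2$. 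The delicate points, where the real work lies, are the \emph{exact} congruence $P_{i}\equiv 2^{t-1}\pmod{2^{t}}$ (not merely a lower bound on the valuation) and the verification that the term $s=k-1$ strictly dominates all others, so that the cancellations in the odd case are pinned to the precise order $2t-2$.
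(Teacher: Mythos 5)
Your proof is correct, and it takes a genuinely different route from the paper's. The paper (following MacMillan and Sondow \cite{MS}) writes $T_k(2^tq)=S_k(2^{t+1}q)-S_k(2^tq)$ via \eqref{eq:4.18} and expands both power sums with the symmetric-pairing identity \eqref{eq:4.19}, $S_k(2m)=m^k+2\sum_i\binom{k}{2i}m^{k-2i}S_{2i}(m)$; there the induction is really carried by the $2$-adic valuations of the power sums $S_k(2^tq)$ (the assertion that fractions such as $S_k(2^tq)/2^{t-1}$ and $S_{k-1}(2^tq)/2^{t-1}$ are odd integers), and the formula for $T_k$ falls out by comparing these dominant terms with $2^{k(t-1)}q^k(2^k-1)$. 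You instead stay entirely inside the family $T_k$: splitting the defining sum by parity of the summands gives the intrinsic recursion $T_k(2^tq)=2^kT_k(2^{t-1}q)+U$, so your induction hypothesis is literally the lemma one level down, and the whole burden falls on the exact valuation of the odd-term sum $U$. Your auxiliary congruence $P_i\equiv 2^{t-1}\pmod{2^t}$ for even $i$ (proved by the doubling step $Q_t\equiv 2Q_{t-1}\pmod{2^t}$, which needs only $2(t-1)\ge t$ and evenness of $i$ to kill the cross term) settles even $k$ at once, and for odd $k$ the pairing about $3x/2$ with the observation that the binomial term $s=k-1$ has strictly minimal valuation $2t-2$ (every other even $s$ contributes at least $4t-4>2t-2$ for $t\ge 2$) correctly pins the cancellation to the exact order. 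I verified the delicate points: the base case $t=1$, the exactness (not just a lower bound) of $v_2(U)$, and the strict inequalities $k+t-2>t-1$ (even $k\ge 2$) and $k+2t-4>2t-2$ (odd $k\ge 3$) needed to conclude from the recursion; all are sound. What each approach buys: yours is self-contained, never invoking the valuation results for $S_k$ or identity \eqref{eq:4.19}, and its induction hypothesis coincides exactly with the statement being proved; the paper's is shorter on the page because it recycles the MacMillan--Sondow machinery for $S_k$, which the authors quote anyway.
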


\begin{proof}
We shall follow the proof of Lemma $1$ of Macmillian-Sondow \cite{MS}. We induct on $t$. Now we introduce the following equality
\begin{equation} \label{eq:4.18}
 T_{k}(x)= S_{k}(2x)- S_{k}(x),
\end{equation}which we will use frequently on this work.
Since $S_{k}(2^2q)$ is even and $S_{k}(2q)$ is odd, by using \eqref{eq:4.18}.
we get $v_{2}(T_{k}(2q))=0$ and so Lemma \ref{lem:4.1} holds for $t=1$. By Lemma \ref{lem:3.2} with $x=2^tq$, it also holds for all $t\geq 1$ when $k=1$. Now we assume inductively that \eqref{eq:4.18} is true for fixed $t\geq 1$.\\

Let $m$ be a positive integer, we can write the power sum $S_{k}(2m)$ as
\begin{equation} \label{eq:4.19}
\begin{aligned}
S_{k}(2m)= m^{k}+ \sum_{j=1}^{m}((m-j)^k+(m+j)^k)
=&m^{k}+2\sum_{j=1}^{m}\sum_{i=0}^{[\frac{k}{2}]}{\binom{k}{2i}}m^{k-2i}j^{2i}\\
=&m^{k}+2\sum_{i=0}^{[\frac{k}{2}]}{\binom{k}{2i}}m^{k-2i}S_{2i}(m).\\
\end{aligned}
\end{equation}

By \eqref{eq:4.18}, putting $x=m$, we have
\begin{equation} \label{eq:4.20}
T_{k}(m)= S_{k}(2m)-S_{k}(m)
\end{equation}

Now we consider \eqref{eq:4.20} with $m=2^tq$. If $k\geq 2$ is even, we extract the last terms of the summations of $S_{k}(2m)$ and $S_{k}(m)$, then we can write as

\begin{equation*}
S_{k}(2^{t+1}q)=2^{kt}q^k+ 2^t \frac{S_{k}(2^tq)}{2^{t-1}}+ 2^{2t+1}\sum_{i=0}^{\frac{k-2}{2}}{\binom{k}{2i}}2^{t(k-2i-2)}q^{k-2i}S_{2i}(2^tq)
\end{equation*}
and
\begin{align*}
S_{k}(2^tq)=2^{k(t-1)}q^k+ 2^{t-1} \frac{S_{k}(2^{t-1}q)}{2^{t-2}}&&\\
+2^{2t-1}\sum_{i=0}^{\frac{k-2}{2}}{\binom{k}{2i}}2^{(t-1)(k-2i-2)}q^{k-2i}S_{2i}(2^{t-1}q).
\end{align*}
Hence we have
\begin{eqnarray*}
T_{k}(2^tq)=2^{k(t-1)}q^k(2^k-1)+2^{t-1}[2\frac{S_{k}(2^{t}q)}{2^{t-1}}- \frac{S_{k}(2^{t-1}q)}{2^{t-2}}]\\
                  +2^{2t+1}\sum_{i=0}^{\frac{k-2}{2}}{\binom{k}{2i}}2^{t(k-2i-2)}q^{k-2i}S_{2i}(2^tq)\\
                  -2^{2t-1}\sum_{i=0}^{\frac{k-2}{2}}{\binom{k}{2i}}2^{(t-1)(k-2i-2)}q^{k-2i}S_{2i}(2^{t-1}q).
\end{eqnarray*}
By the induction hypothesis, the fraction is actually an odd integer. Since $k(t-1)> t-1$, we get that $v_{2}(T_{k}(2^tq))=t-1$, as desired.\\

Now we consider the case $k\geq 3$ is odd. Similarly to the former case, we have
\begin{eqnarray*}
S_{k}(2^{t+1}q)= 2^{tk}q^k+ 2^{2t}kq\frac{S_{k-1}(2^tq)}{2^{t-1}}+ 2^{3t+1}\sum_{i=0}^{\frac{k-3}{2}}{\binom{k}{2i}}2^{t(k-2i-3)}q^{k-2i}S_{2i}(2^tq)
\end{eqnarray*}
and
\begin{eqnarray*}
S_{k}(2^{t}q)= 2^{k(t-1)}q^k+ 2^{2t-2}kq\frac{S_{k-1}(2^{t-1}q)}{2^{t-2}}\\
+2^{3t-2}\sum_{i=0}^{\frac{k-3}{2}}{\binom{k}{2i}}2^{(t-1)(k-2i-3)}q^{k-2i}S_{2i}(2^{t-1}q).
\end{eqnarray*}

From here, we get
\begin{eqnarray*}
T_{k}(2^tq)=2^{k(t-1)}q^k(2^k-1)+ 2^{2t-2}kq[ \frac{2^2S_{k-1}(2^{t}q)}{2^{t-1}}- \frac{S_{k-1}(2^{t-1}q)}{2^{t-2}}]\\
+2^{3t+1}\sum_{i=0}^{\frac{k-3}{2}}{\binom{k}{2i}}2^{t(k-2i-3)}q^{k-2i}S_{2i}(2^tq)\\
-2^{3t-2}\sum_{i=0}^{\frac{k-3}{2}}{\binom{k}{2i}}2^{(t-1)(k-2i-3)}q^{k-2i}S_{2i}(2^{t-1}q).
\end{eqnarray*}
Again by induction, the fraction is an odd integer.\\

Since $k(k-1)>2(t-2)$ and $k$ and $q$ are odd, wee see that $v_{2}(T_{k}(2^tq))=2t-2$, as required. This completes the proof of Lemma.
\end{proof}

\begin{lemma}\label{lem:4.2}
$(i)$ Let $x$ be a positive even integer. Then we have,
\begin{equation*}
v_{2}(T_{k}(x)) =
\left\{ \def\arraystretch{1.2}
\begin{array}{@{}c@{\quad}l@{}}
v_{2}(x)-1, & \text{if $k=1$ or k is even,}\\
2v_{2}(x)-2, & \text{if $k\geq 3$  is odd.}\\
\end{array}\right.
\end{equation*}

$(ii)$ Let $x$ be a positive odd integer. If $x$ is odd and $k=1$, then for any solution $(k,n,x,y)$ of \eqref{eq:1.5} we get $v_{2}(T_{k}(x))=v_{2}(3x+1)-1$ .\\

If  $ x \equiv 1,5 \pmod{8} $ and $ x \not\equiv 1 \pmod{32} $ with $k\neq 1$, then we have
\begin{equation*}
v_{2}(T_{k}(x))= \left\{ \def\arraystretch{1.2}
\begin{array}{@{}c@{\quad}l@{}}
v_{2}(7x+1)-1, & \text{if $x \equiv 1\pmod{8}$ and k=2,}\\
v_{2}((5x+3)(3x+1))-2, & \text{if $x \equiv 1 \pmod{8}$ and $k=3$,}\\
v_{2}(3x+1), & \text{if $x \equiv 5 \pmod{8}$ and $k\geq 3$ is odd,}\\
$1$, & \text{if $x \equiv 5 \pmod{8}$ and $k\geq 2$ is even,} \\
$2$, & \text{if $x \equiv 9 \pmod{16}$ and $k\geq 4$ is even,}\\
$3$, & \text{if $x \equiv 9 \pmod{16}$ and $k\geq 5$ is odd}\\
& \text{or}\\
&\text{if $x \equiv 17 \pmod{32}$ and $k\geq 4$ is even,} \\
$4$, & \text{if $x \equiv 17 \pmod{32}$ and $k\geq 5$ is odd.} \\
\end{array}
\right.
\end{equation*}
If  $ x \equiv 3,7 \pmod{8} $, then for any solution $(k,n,x,y)$ of \eqref{eq:1.5}, we obtain $v_{2}(T_{k}(x))=0.$
\end{lemma}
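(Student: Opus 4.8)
The plan is to split according to the parity of $x$. For part $(i)$, where $x$ is even, I would write $x=2^{t}q$ with $q$ odd and $t=v_{2}(x)\ge 1$ and simply invoke Lemma~\ref{lem:4.1}: it gives $v_{2}(T_{k}(2^{t}q))=t-1=v_{2}(x)-1$ when $k=1$ or $k$ is even, and $2t-2=2v_{2}(x)-2$ when $k\ge 3$ is odd, which is exactly the assertion. So $(i)$ is immediate and the real work is $(ii)$.

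For part $(ii)$, with $x$ odd, I would exploit the same symmetric pairing that drives the proof of Lemma~\ref{lem:4.1}. Since $x$ is odd the interval $[x+1,2x]$ is symmetric about its integer midpoint $c:=(3x+1)/2$, and setting $N:=(x-1)/2$ the $x=2N+1$ summands pair off as $(c-j)^{k}+(c+j)^{k}$. Expanding and using $1+2N=x$ yields the master identity
\begin{equation*}
T_{k}(x)=x\,c^{k}+2\sum_{i=1}^{\lfloor k/2\rfloor}\binom{k}{2i}c^{\,k-2i}S_{2i}(N),\qquad c=\frac{3x+1}{2},\ N=\frac{x-1}{2}.
\end{equation*}
Because $x$ is odd one has $v_{2}(c)=v_{2}(3x+1)-1$, so the whole computation reduces to reading off $2$-adic valuations from this formula. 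When $x\equiv 3,7\pmod 8$ we have $3x+1\equiv 2\pmod 4$, hence $c$ is odd, $x\,c^{k}$ is odd and the remaining sum is even; thus $T_{k}(x)$ is odd and $v_{2}(T_{k}(x))=0$, settling that case at once. The case $k=1$ is handled directly from $T_{1}(x)=x(3x+1)/2$ of Lemma~\ref{lem:3.2}, giving $v_{2}(T_{1}(x))=v_{2}(3x+1)-1$.

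The remaining cases $x\equiv 1,5\pmod 8$ (so $c$ is even) require identifying the dominant term of the master identity. For $k$ even the term of smallest valuation is $2S_{k}(N)$ (the index $i=k/2$), while for $k$ odd it is $2k\,c\,S_{k-1}(N)$ (the index $i=(k-1)/2$); both involve an \emph{even}-index power sum $S_{j}(N)$. The two technical facts I would establish are: first, that every other term is strictly larger $2$-adically, which forces the thresholds $k\ge 4$ (even) and $k\ge 5$ (odd) --- precisely because for $k=2$ the term $x\,c^{2}$ and for $k=3$ the term $x\,c^{3}$ have the same valuation as the putative dominant term and can cancel, so these two small exponents must be treated separately via the closed forms $T_{2}(x)=x(2x+1)(7x+1)/6$ and $T_{3}(x)=x^{2}(3x+1)(5x+3)/4$, which produce exactly the stated values $v_{2}(7x+1)-1$ and $v_{2}((5x+3)(3x+1))-2$; and second, the exact evaluation $v_{2}(S_{j}(N))=v_{2}(N)-1$ for even $j$ and even $N$. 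Granting these, for $k$ even one gets $v_{2}(T_{k}(x))=1+v_{2}(S_{k}(N))=v_{2}(N)$ and for $k$ odd $v_{2}(T_{k}(x))=1+v_{2}(c)+v_{2}(S_{k-1}(N))=v_{2}(c)+v_{2}(N)$; substituting the values of $v_{2}(N)$ and $v_{2}(c)$ determined by the residue of $x$ modulo $8,16,32$ reproduces every line of the table (for instance $x\equiv 9\pmod{16}$ gives $v_{2}(N)=2$, hence $2$ for even $k$ and $3$ for odd $k$, while $x\equiv 17\pmod{32}$ gives $v_{2}(N)=3$, hence $3$ and $4$).

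The main obstacle is the exact valuation $v_{2}(S_{j}(N))=v_{2}(N)-1$: only the lower bound is routine, and to pin the valuation down I would reduce $S_{j}(N)$ modulo $2^{\,v_{2}(N)+1}$, discard the even summands (whose $j$-th powers vanish to order $\ge j\ge 4$), and evaluate $\sum_{a\ \mathrm{odd}\le N}a^{j}$ by noting that for even $j$ the map $a\mapsto a^{j}$ depends only on $a$ modulo a small power of $2$ and that the odd residues are equidistributed in $[1,N]$ once $2^{\,v_{2}(N)}\mid N$; the ``quadratic twist'' (some odd $a^{j}\equiv 9$ rather than $1$ modulo $16$) only multiplies the leading contribution $N/2$ by an odd factor and so does not change its valuation. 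Verifying both the strict-dominance claims and this congruence to the right precision, uniformly over the residue classes modulo $32$ and over the two parities of $k$, is where essentially all of the bookkeeping lies.
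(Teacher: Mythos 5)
Your proposal is correct, and for part (ii) it takes a genuinely different route from the paper; part (i) is the same in both (quote Lemma \ref{lem:4.1}). The paper treats odd $x$ by comparing $T_k(x)$ with the auxiliary sum $Q_k(x)=x^k+(x+1)^k+\cdots+(2x-2)^k=T_k(x-1)$, whose argument is even: from $T_k(x)=Q_k(x)-x^k+(2x-1)^k+(2x)^k$ it reduces modulo $8$, $16$, $32$, reads $v_2(Q_k(x))$ off part (i), and handles $x\equiv 5\pmod 8$ with odd $k\ge 3$ by a separate substitution $x=(2^dr-1)/3$ and a symmetry argument modulo $2^d$. You instead pair the summands of $T_k(x)$ about the integer midpoint $c=(3x+1)/2$; your master identity is correct (the pairs run from $c-N=x+1$ to $c+N=2x$), it disposes of $x\equiv 3,7\pmod 8$ at once since $c$ is then odd, and the remaining classes become a term-by-term valuation comparison. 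I checked your dominance bookkeeping: with $v_2(c)=v_2(3x+1)-1$ and $v_2(N)\in\{1,2,3\}$ in the admissible classes (this is exactly where the hypothesis $x\not\equiv 1\pmod{32}$ enters), every non-leading power-sum term exceeds the leading one ($2S_k(N)$ for even $k$, $2kcS_{k-1}(N)$ for odd $k$) by at least $2v_2(c)$, while $xc^k$ has valuation $kv_2(c)$, which strictly exceeds $v_2(N)$, resp.\ $v_2(c)+v_2(N)$, precisely when $k\ge 4$ is even, resp.\ $k\ge 5$ is odd (and for all $k\ge2$ when $x\equiv 5\pmod 8$, since there $v_2(c)\ge 2$); so your thresholds and your separate closed-form treatment of $k=2,3$ are exactly right and reproduce every entry of the table. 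Your one substantive dependency, the exact valuation $v_2(S_j(N))=v_2(N)-1$ for even $j$ and even $N$, need not be proved by the somewhat loose equidistribution sketch: it is the MacMillan--Sondow result (the paper's reference \cite{MS}, quoted also in \cite{H}), and it is the same fact that powers the induction inside Lemma \ref{lem:4.1}; alternatively, since only $v_2(N)\le 3$ occurs, your reduction modulo $16$ suffices, and the case $j=2$ follows at once from $S_2(N)=N(N+1)(2N+1)/6$. A bonus of your route is that it proves exactness of the valuation transparently in the case $x\equiv 5\pmod 8$, $k\ge 3$ odd, where the paper exhibits divisibility by $2^d$ and merely asserts that the quotient is odd; what the paper's $Q_k$-trick buys, conversely, is that it recycles part (i) as a black box and never needs any statement about $S_j$ beyond Lemma \ref{lem:4.1}.
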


\begin{proof}
$(i)$ Firstly, if $k\geq 2$ is even, since $2x+1$ is always odd, then we have
\begin{equation*}
v_{2}(\frac{x(2x+1)}{2})=v_{2}(x)-1.
\end{equation*}
Putting $x=2^tq$ where $q$ is odd and $t\geq 1$, we get
\begin{equation*}
v_{2}(\frac{2^tq(2^{t+1}q+1)}{2})=v_{2}(2^{t-1}q)=t-1.
\end{equation*}
Secondly if we consider the case $k\geq 3$ is odd, then
\begin{equation*}
v_{2}(\frac{x^2(3x+1)}{4})=v_{2}(x^2)-2.
\end{equation*}
Putting $x=2^tq$, we have
\begin{equation*}
v_{2}((2^tq)^2)-2=v_{2}(2^{2t})-2=2t-2.
\end{equation*}

Finally, in the case of $k=1$, we have
\begin{equation*}
v_{2}(\frac{x(3x+1)}{2})=v_{2}(x)-1.
\end{equation*}
Set $x=2^tq$, we have
\begin{equation*}
v_{2}(2^tq)-1=t-1.
\end{equation*}
So, the proof is completed.\\

$(ii)$ Since $S_{1}(x)=\frac{x(x+1)}{2}$, $S_{2}(x)=\frac{x(x+1)(2x+1)}{6}$ and $S_{3}(x)=(\frac{x(x+1)}{2})^{2}$ for any positive integer $x$, by \eqref{eq:4.18} if $x$ is odd or $x \equiv 1 \pmod{8}$, then the statement is automatic for $k=1$ or $k=2$, $3$, respectively.

Next we consider the case $x \equiv 5 \pmod{8}$ and $k\geq 3$ is odd. Since  $3x+1 \equiv 0 \pmod{8}$, we have $3x+1=2^dr$ with $d\geq 3$, $2\nmid r$. So we obtain
\begin{equation} \label{eq:4.21}
v_{2}(3x+1)=d
\end{equation}
Since $x$ is odd, $T_{k}(x)$ has  exactly odd terms. Putting $x=\frac{2^{d}r-1}{3}$  in  \eqref{eq:1.6}, we have

\begin{equation} \label{eq:4.22}
T_{k}(\frac{2^dr-1}{3}) = (\frac{1}{3})^k [(2^dr+2)^k + (2^dr+5)^k + \cdots + (32^{d-1}r)^k + \cdots + (2^{d+1}r-2)^k]
\end{equation}
which has $(32^{d-1}r)^k$ as the middle term of expansion. Considering \eqref{eq:4.22} in modulo $2^d$ with $k(d-1)>d$, we obtain $T_{k}(\frac{2^dr-1}{3})\equiv 0 \pmod{2^d}$. Then we have $v_{2}(T_{k}(\frac{2^dr-1}{3}))=v_{2}(2^dt)=d$  with $2\nmid t$. By \eqref{eq:4.21}, the statement follows in this case, as well.\\

Now we consider the case $x \equiv 5 \pmod{8}$ and $k\geq 2$ is even. We distinguish two cases. Assume first $k\geq 4$ is even. Using the polynomial

\begin{equation} \label{eq:4.23}
Q_{k}(x)=x^{k}+(x+1)^k+ (x+2)^k+ ... + 2^{k}(x-1)^k
\end{equation}
and the equality
\begin{equation} \label{eq:4.24}
T_{k}(x)=Q_{k}(x)-x^k+ (2x-1)^k+(2x)^k
\end{equation}
we obtain
\begin{equation*}
T_{k}(x) \equiv Q_{k}(x) \pmod{8}.
\end{equation*}
Then we have
\begin{equation} \label{eq:4.25}
v_{2}(T_{k}(x))= v_{2}(Q_{k}(x))
\end{equation}
Applying Lemma \ref{lem:4.2} $(i)$ on the polynomial  $Q_{k}(x)$ we obtain $v_{2}(Q_{k}(x))= v_{2}(x-1)-1$ and hence the statement follows also in this case. For the case $k=2$, by \eqref{eq:4.18}  we get also $v_{2}(T_{k}(x))=v_{2}(7x+1)-1=1$.\\

Next we consider the case  $x \equiv 9 \pmod{16}$ and $k\geq5$ is odd. By \eqref{eq:4.24} we have
\begin{equation} \label{eq:4.26}
T_{k}(x) \equiv Q_{k}(x)+8 \pmod{16}
\end{equation}
Using Lemma \ref{lem:4.2} $(ii)$, we have $v_{2}(Q_{k}(x))=2v_{2}(x-1)-2$. So, we get $v_{2}(Q_{k}(x))=4$ and
\begin{equation} \label{eq:4.27}
Q_{k}(x)=2^4t, \quad 2\nmid t
\end{equation}
By  \eqref{eq:4.26} and  \eqref{eq:4.27}, the statement follows in this case.\\

Now we consider the case  $x \equiv 9 \pmod{16}$ and $k\geq 4$ is even. By  \eqref{eq:4.24} we have
\begin{equation} \label{eq:4.28}
T_{k}(x) \equiv Q_{k}(x) \pmod{16}
\end{equation}
Using Lemma \ref{lem:4.2} $(i)$, we get $v_{2}(Q_{k}(x))=v_{2}(x-1)-1$ . So we get  $v_{2}(T_{k}(x))=v_{2}(Q_{k}(x))=2$ with \eqref{eq:4.28}.\\

Next we consider the case  $x \equiv 17 \pmod{32}$ and $k\geq 4$ is even. We distinguish two cases. If $k=4$ then,
\begin{equation} \label{eq:4.29}
T_{4}(x) \equiv Q_{4}(x)+16 \pmod{32}
\end{equation}
Using Lemma \ref{lem:4.2} $(i)$ we obtain $v_{2}(Q_{4}(x))=3$ and
\begin{equation} \label{eq:4.30}
Q_{4}(x)=2^3r, 2\nmid r
\end{equation}
By \eqref{eq:4.29} and  \eqref{eq:4.30} , we get $v_{2}(T_{4}(x))=3$. For the case $k\geq 6$ is even, by \eqref{eq:4.24} we have
\begin{equation*}
T_{k}(x) \equiv Q_{k}(x) \pmod{32}
\end{equation*}
Similar to the former cases, we obtain $v_{2}(T_{k}(x))=3$.\\

Now we consider $x \equiv 17 \pmod{32}$ and $k\geq 5$ is odd, by \eqref{eq:4.24} we have
\begin{equation} \label{eq:4.31}
T_{k}(x) \equiv Q_{k}(x)+16 \pmod{32}
\end{equation}
By  Lemma \ref{lem:4.2} $(ii)$, we have $v_{2}(Q_{k}(x))=6$ . With \eqref{eq:4.31} similar to the former cases, we get $v_{2}(T_{k}(x))=4$.\\

Next we consider the case  $x \equiv 3 \pmod{8}$. By \eqref{eq:4.24} we obtain
\begin{equation*}
T_{k}(x) \equiv Q_{k}(x)+2 \pmod{8}
\end{equation*}or
\begin{equation*}
T_{k}(x) \equiv Q_{k}(x) \pmod{8}
\end{equation*}
where $k$ is odd or even, respectively. In both cases we obtain  $v_{2}(Q_{k}(x))=0$ using  Lemma \ref{lem:4.2}. Then the statement follows in this case.\\

Now we consider the case  $x \equiv 7 \pmod{8}$. By \eqref{eq:4.24} we get
\begin{equation*}
T_{k}(x) \equiv Q_{k}(x)+6 \pmod{8}
\end{equation*} or
\begin{equation*}
T_{k}(x) \equiv Q_{k}(x)+2 \pmod{8}
\end{equation*}
where $k$ is odd or even, respectively. In both cases, we get $v_{2}(Q_{k}(x))=0$ using Lemma  \ref{lem:4.2}. Then the statement follows in this case, as well. So, the proof of Lemma is completed.
\end{proof}

\begin{lemma}\label{lem:4.3}
Assume that $k$ is not even if $x \equiv 5 \pmod{9}$. Then we have
\begin{equation*}
v_{3}(T_{k}(x))= \left\{\def\arraystretch{1.2}%
\begin{array}{@{}c@{\quad}l@{}}
v_{3}(x), & \text{if k=1,}\\
v_{3}(x)-1, & \text{if $x \equiv 0 \pmod{3}$ and $k\geq 2$ is even,}\\
v_{3}(kx^2), & \text{if $x \equiv 0 \pmod{3}$ and $k> 3$ is odd,}\\
v_{3}(x^2(5x+3)), & \text{if $x \equiv 0 \pmod{3}$ and $k= 3$,} \\
0, & \text{if $x \equiv \pm 1 \pmod{3}$ and $k\geq 3$ is odd,}\\
0, & \text{if $x \equiv 2,8 \pmod{9}$ and $k\geq 2$ is even,}\\
v_{3}(2x+1)-1, & \text{if $x \equiv 1 \pmod{3}$ and $k\geq 2$ is even.} \\
\end{array}
\right.
\end{equation*}
\end{lemma}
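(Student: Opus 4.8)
The plan is to reduce everything to the identity $T_k(x)=S_k(2x)-S_k(x)$ of \eqref{eq:4.18} together with the valuations $v_3(S_k(\cdot))$ recorded in Lemma \ref{lem:3.4}. First I would clear away the two special exponents by explicit factorisation. For $k=1$, Lemma \ref{lem:3.2} gives $T_1(x)=\tfrac12 x(3x+1)$, and since $3x+1\equiv1\pmod3$ we read off $v_3(T_1(x))=v_3(x)$. For $k=3$, starting from $S_3(x)=\bigl(\tfrac12 x(x+1)\bigr)^2$ a short computation yields
\[
T_3(x)=S_3(2x)-S_3(x)=\tfrac14\,x^2(3x+1)(5x+3),
\]
so that $v_3(T_3(x))=v_3\bigl(x^2(5x+3)\bigr)$, again because $3x+1\equiv1\pmod3$. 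For every remaining $k$ the strategy is to evaluate $v_3(S_k(2x))$ and $v_3(S_k(x))$ from Lemma \ref{lem:3.4} after reducing $x$ and $2x$ modulo $3$ (and, where the formula requires it, modulo $9$), and then to determine the valuation of the difference.

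The routine part comprises the cases in which the two valuations $v_3(S_k(2x))$ and $v_3(S_k(x))$ are distinct; there $v_3(T_k(x))$ is just their minimum. Inspecting Lemma \ref{lem:3.4} shows that this occurs exactly for $x\equiv\pm1\pmod3$ with $k\ge3$ odd, and for $x\equiv2,8\pmod9$ with $k\ge2$ even. For example, if $x\equiv1\pmod3$ and $k$ is odd then $v_3(S_k(x))=0$, while $2x\equiv2\pmod3$ forces $v_3(S_k(2x))\ge1$ through the factor $(2x+1)^2$; in the even case the reduction modulo $9$ is precisely what decides which of the two summand valuations $v_3(x+1)-1$ and $v_3(4x+1)-1$ vanishes and which is positive. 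In all these cases $v_3(T_k(x))=0$ follows at once.

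The delicate cases are those with $v_3(S_k(2x))=v_3(S_k(x))$, where the minimum principle fails and one must compare leading $3$-adic coefficients. For $k$ even these are $x\equiv0\pmod3$ (common value $v_3(x)-1$) and $x\equiv1\pmod3$ (common value $v_3(2x+1)-1$). Here I would apply the sharp congruences of Lemma \ref{lem:3.5} with $p=3$: since $(p-1)/2=1$, its parts $(ii),(iv),(iii)$ treat the residues $0,1,2$, which are all residue classes, and for even $k$ (so $p-1\mid k$) they pin down $S_k(m)$ modulo the relevant power of $3$ up to its leading coefficient. One then checks that the leading coefficients of $S_k(2x)$ and $S_k(x)$ are incongruent modulo $3$ — for $x\equiv0\pmod3$ this is because the cofactors $x/3^{v_3(x)}$ and $2x/3^{v_3(x)}$ differ modulo $3$ — so no cancellation takes place and $v_3(T_k(x))$ equals the common valuation. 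The class $x\equiv5\pmod9$ with $k$ even, excluded by hypothesis, is exactly the remaining coincident case: there the leading coefficients do cancel (for instance $v_3(T_2(5))=1\neq0$), which is why it must be left out.

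The main obstacle is the last case, $x\equiv0\pmod3$ with $k>3$ odd, where the target value $v_3(kx^2)=v_3(k)+2v_3(x)$ exceeds the common summand valuation $v_3(k)+2v_3(x)-1$ by one. Now the leading coefficients of $S_k(2x)$ and $S_k(x)$ coincide modulo $3$, the leading terms cancel, and one must expand one $3$-adic digit further; Lemma \ref{lem:3.5} only gives $S_k(m)\equiv0$ for odd $k$ and is therefore too weak. Instead I would use the binomial expansion
\[
S_k(2m)=m^k+2\sum_{i=0}^{[k/2]}\binom{k}{2i}m^{k-2i}S_{2i}(m)
\]
of \eqref{eq:4.19}, exactly as in the proof of Lemma \ref{lem:4.1}: for odd $k$ the top term $2k\,m\,S_{k-1}(m)$ carries the factor $\binom{k}{k-1}=k$, and a careful bookkeeping of the $3$-adic valuations of all the summands $2\binom{k}{2i}m^{k-2i}S_{2i}(m)$ (via Lemma \ref{lem:3.4}), together with the cancellation of the lowest-order contributions, leaves a term of valuation $v_3(kx^2)$. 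This is the step requiring the most care, and it is where the factor $k$ in $v_3(kx^2)$ enters — just as the factor $k$ arose in the odd case of Lemma \ref{lem:4.1}; the explicit factorisation already found for $k=3$ serves as the model computation.
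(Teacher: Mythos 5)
Your proposal follows the same skeleton as the paper's proof: explicit factorizations for $k=1$ and $k=3$, Lemma \ref{lem:3.4} plus the ``minimum of distinct valuations'' principle for $x\equiv\pm1\pmod 3$ with $k$ odd and $x\equiv 2,8\pmod 9$ with $k$ even, and the congruences of Lemma \ref{lem:3.5} for the coincident even-$k$ cases. In those coincident cases your route differs only slightly: for $x\equiv 0\pmod 3$ the paper first reduces $S_k(2x)\equiv 2S_k(x)\pmod{3^d}$ via Lemma \ref{lem:3.3} and then invokes Lemma \ref{lem:3.5}$(ii)$ once, whereas you apply Lemma \ref{lem:3.5}$(ii)$ to both $S_k(x)$ and $S_k(2x)$ and compare leading coefficients. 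Be aware that your comparison needs the leading term to be $-q\,3^{d-1}$ with $q=m/3^d$; Lemma \ref{lem:3.5}$(ii)$ \emph{as quoted in the paper} reads $-p^{d-1}$ with no $q$, under which your two leading terms would coincide and cancel. The $q$-dependent form is what Sondow--Tsukerman actually prove, and it is what the paper's own proof silently uses (it writes $T_k(x)\equiv -3^{d-1}q$), so your argument is mathematically correct, but it rests on correcting that misprint.

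The genuine gap is the case $x\equiv 0 \pmod 3$, $k>3$ odd, which you rightly call the main obstacle. (The paper is no better here: from the equality of valuations in \eqref{eq:4.34} it simply asserts $T_k(x)\equiv 0\pmod{3^{\gamma+2d}}$ and then exactness, which does not follow as written.) Your mechanism is the right one --- in \eqref{eq:4.19} the top term $2kxS_{k-1}(x)$ is $3$-adically close to $4S_k(x)$, so $T_k(x)$ behaves like $3S_k(x)$, of valuation exactly $\gamma+2d$ --- but the claim that bookkeeping of the remaining summands ``leaves a term of valuation $v_3(kx^2)$'' is not routine, because those summands are \emph{not} all of strictly higher valuation. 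Concretely, with $d=v_3(x)=1$ and $\gamma=v_3(k)\geq 1$, the term $2\binom{k}{3}x^{3}S_{k-3}(x)$ has valuation $v_3\bigl(\tbinom{k}{3}\bigr)+3d+(d-1)=(\gamma-1)+4d-1=\gamma+2d$, i.e.\ it contributes at exactly the target level; for $k=9$, $x=3$ this term is $2\cdot 84\cdot 27\cdot S_6(3)$, of valuation $4=v_3(kx^2)$. So it does not suffice to show that the two lowest-order terms cancel one level down and that the ``$3S_k(x)$'' piece has a nonzero digit: you must control the sum of \emph{all} contributions at level $\gamma+2d$, which requires a finer analysis of the $v_3\bigl(\tbinom{k}{2i}\bigr)$ and of the digits of the $S_{2i}(x)$ (or a different device, such as an induction on $d$ in the style of Lemma \ref{lem:4.1}). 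As it stands, this step is a plausible strategy, not a proof.
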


\begin{proof}
When $k=1$, $T_{1}(x)=\frac{x(3x+1)}{2}$. Then statement is shown automatically.\\

When $x \equiv 0 \pmod{3}$ and $k\geq 2$ is even, by \eqref{eq:3.17} we have
\begin{equation} \label{eq:4.32}
S_{k}(2x)\equiv 2S_{k}(x)\pmod{3^d}, \text{with p=3}.
\end{equation}
Considering \eqref{eq:4.18} in modulo  $3^{d}$, with \eqref{eq:4.32} we have
\begin{equation} \label{eq:4.33}
T_{k}(x)\equiv S_{k}(x) \pmod{3^d}.
\end{equation}
Using Lemma  \ref{lem:3.5} $(ii)$ and \eqref{eq:4.33}, we get $T_{k}(x)\equiv -3^{d-1}q \pmod{3^d}$. And hence $v_{3}(T_{k}(x))=d-1$. This is desired case.\\

When  $x \equiv 0 \pmod{3}$ and $k> 3$ is odd, writing $x=q3^d$ with $k=3^{\gamma}k'$ and $q\nmid 3$, by Lemma \ref{lem:3.4} we have
\begin{equation} \label{eq:4.34}
v_{3}(S_{k}(2x))= v_{3}(S_{k}(x))= \gamma+ 2d- 1
\end{equation}
Using \eqref{eq:4.18} and \eqref{eq:4.34}, we get
\begin{equation*}
T_{k}(x)\equiv 0 \pmod{3^{\gamma+ 2d}}.
\end{equation*}
And hence $v_{3}(T_{k}(x))= v_{3}(kx^{2})= \gamma+ 2d$.\\

When  $x \equiv 0 \pmod{3}$ and $k= 3$, we have $T_{3}(x)=\frac{x^2(5x+3)(3x+1)}{4}$. Since $3x+1 \equiv 1 \pmod{3}$, the statement follows in this case.\\

When  $x \equiv 1 \pmod{3}$ and $k\geq 3$ is odd, using Lemma \ref{lem:3.4}, $v_{3}(S_{k}(2x))= v_{3}(kx^2(x+1)^2)-1$ and $v_{3}(S_{k}(x))=0$.
By \eqref{eq:4.18} the statement follows in this case.\\

When $x \equiv 2 \pmod{3}$ and $k\geq 3$ is odd, using Lemma \ref{lem:3.4}, similar to the former case we obtain $v_{3}(T_{k}(x))=0$ with \eqref{lem:4.2}.\\

When $x \equiv 8 \pmod{9}$ or $x \equiv 2 \pmod{9}$ and $k\geq 2$ is even, by \eqref{eq:4.18} and Lemma \ref{lem:3.4} we get $v_{3}(S_{k}(2x))= v_{3}(2x(2x+1)(4x+1))-1$ or $v_{3}(S_{k}(2x))= v_{3}(x(x+1)(2x+1))-1$, respectively. If $x \equiv 8 \pmod{9}$, then $v_{3}(S_{k}(2x))=0$ and hence $v_{3}(T_{k}(x))=v_{3}(S_{k}(2x))$. If $x \equiv 2 \pmod{9}$, then $v_{3}(S_{k}(x))=0$ and hence $v_{3}(T_{k}(x))=v_{3}(S_{k}(x))$.\\

Assume now that $x \equiv 1 \pmod{3}$ and $k\geq 2$ is even. Applying Lemma \ref{lem:3.5} $(iv)$, with \eqref{lem:4.2} we obtain
\begin{equation*}
T_{k}(x)\equiv 3^{d-1}(-\frac{1}{2}) \pmod{3^{d}}.
\end{equation*}
And hence $v_{3}(T_{k}(x))=d-1$. By Lemma \ref{lem:3.5}, we write $x=q3^d+ r\frac{3^d-1}{2}$ where $r \equiv 1 \pmod{3}$, $0\leq q \not\equiv r \equiv x \pmod{3}$. So we get $2x+1=3^d(2q+1)$.
Since $v_{3}(2x+1)-1=d-1$, the statement follows in this case. So the proof is completed.
\end{proof}

\section{Proofs of the main results} \label{sec:5}

Now we are ready to prove our main results. We start with Theorem \ref{theo:2.1}, since it will be used in the proofs of the other statements.

\begin{proof}[Proof of Theorem \ref{theo:2.1}]
	
$(i)$ Since $x \equiv 0 \pmod{4}$, by Lemma \ref{lem:4.2} we have $v_{2}(T_{k}(x))> 0$, i.e $T_{k}(x)$ is even. Thus if \eqref{eq:1.5} satisfies, then $v_{2}(y)> 0$ and we have

\begin{equation*}
nv_{2}(y)=v_{2}(y^n)=v_{2}(T_{k}(x)) =
\left\{\def\arraystretch{1.2}
\begin{array}{@{}c@{\quad}l@{}}
v_{2}(x)-1, & \text{if $k$ is even,}\\
2v_{2}(x)-2, & \text{if $k$ is odd.}\\
\end{array}\right.
\end{equation*}
implying the statement in this case.\\

$(ii)$ As now $x \equiv 1,5 \pmod{8}$ and  $x \not\equiv 1 \pmod{32}$ with $k\neq 1$, Lemma \ref{lem:4.2} $(ii)$ implies that $v_{2}(T_{k}(x))> 0$. Hence \eqref{eq:1.5} gives $v_{2}(y)> 0$ and we have
\begin{equation*}
nv_{2}(y)=v_{2}(T_{k}(x))\\[10pt]
\\
=  \left\{\def\arraystretch{1.2}%
\begin{array}{@{}c@{\quad}l@{}}
v_{2}(7x+1)-1, & \text{if $x \equiv 1\pmod{8}$ and k=2,}\\
v_{2}((5x+3)(3x+1))-2, & \text{if $x \equiv 1 \pmod{8}$ and $k=3$,}\\
v_{2}(3x+1), & \text{if $x \equiv 5 \pmod{8}$ and $k\geq 3$ is odd,}\\
$1$, & \text{if $x \equiv 5 \pmod{8}$ and $k\geq 2$ is even,} \\
$2$, & \text{if $x \equiv 9 \pmod{16}$ and $k\geq 4$ is even,}\\
$3$, & \text{if $x \equiv 9 \pmod{16}$ and $k\geq 5$ is odd}\\
& \text{or}\\
& \text{if $x \equiv 17 \pmod{32}$ and $k\geq 4$ is even,} \\
$4$, & \text{if $x \equiv 17 \pmod{32}$ and $k\geq 5$ is odd.} \\
\end{array}
\right.
\end{equation*} \\
And if $x \equiv 1 \pmod{4}$ and $k=1$, then Lemma \ref{lem:4.2} $(ii)$ also implies that $v_{2}(T_{k}(x))> 0$. Hence \eqref{eq:1.5} gives $v_{2}(y)> 0$ and we obtain $nv_{2}(y)=v_{2}(y^n)=v_{2}(T_{k}(x))=v_{2}(3x+1)-1$. Implying the statement in this case, as well. So,the proof of the case $(ii)$ is completed.\\

$(iii)$ Suppose now that  $x \equiv 0 \pmod{3}$ and $k$ is odd or $x \equiv 0,1 \pmod{3}$ and $k\geq 2$ is even, by Lemma \ref{lem:4.3} implies that $v_{3}(y)> 0$ and we have
\begin{equation*}
nv_{3}(y)=v_{3}(T_{k}(x))\\
= \left\{\def\arraystretch{1.2}%
\begin{array}{@{}c@{\quad}l@{}}
v_{3}(x), & \text{if $x \equiv 0 \pmod{3}$ and k=1,}\\
v_{3}(x)-1, & \text{if $x \equiv 0 \pmod{3}$ and $k\geq 2$ is even,}\\
v_{3}(kx^2), & \text{if $x \equiv 0 \pmod{3}$ and $k>3$ is odd,}\\
v_{3}(x^2(5x+3)), & \text{if $x \equiv 0 \pmod{3}$ and $k=3$,} \\
v_{3}(2x+1)-1, & \text{if $x \equiv 1 \pmod{3}$ and $k\geq 2$ is even.} \\
\end{array}\right.
\end{equation*}\\
So, the proof of Theorem \ref{theo:2.1} is completed.
\end{proof}

\begin{proof}[Proof of Theorem \ref{theo:2.3}]
Observe that  since $x \equiv 4 \pmod{8}$, we have $v_{2}(T_{k}(x))=v_{2}(x)-1=1$. Hence if $k=1$ or $k$ is even then by part $(i)$ of Theorem \ref{theo:2.1} we obtain $n\leq 1,$ which is impossible. Since $x \equiv 5 \pmod{8}$, we have $v_{2}(T_{k}(x))=1$. Hence if $k\geq2$ is even then by part $(ii)$ of Theorem \ref{theo:2.1} we obtain $n\leq 1$, which is impossible. Since $x \equiv 1 \pmod{8}$, we have $v_{2}(T_{k}(x))=v_{2}(3x+1)-1=1$. Hence if $k=1$ then by part $(ii)$ of Theorem \ref{theo:2.1} we obtain $n\leq 1$, which is impossible. Thus, the proof is completed.
\end{proof}

\begin{proof}[Proof of Theorem \ref{theo:numerical}]
Let $2 \leq x \leq 13$ and consider equation \eqref{eq:1.5} in unknown integers $(k,y,n)$ with
$k \geq 1, y \geq 2$ and $n \geq 3$. We distinguish two cases according to $x \in \{ 2,3,6,7,10,11 \}$
or $x \in \{ 4,5,8,9,12,13\}$, respectively.

Assume first that $x \in \{ 2,3,6,7,10,11 \}$ is fixed. In this case for $k \leq 83$ a direct computation shows that
$T_k(x)$ is not a perfect $n^{\rm th}$ power, so equation \eqref{eq:1.5} has no solution.
Now we assume that $k\geq 83$. Now we split the treatment into 3 subcases according to the size of $y$.
If $y \leq 4x^2$ then Lemma \ref{L_Baker_bounds} shows that $k\leq k_1$.  Further, if $4x^2 < y \leq 10^6$ then we get
$n\leq n_0$ by Lemma \ref{L_Baker_bounds} and thus $T_k(x)\leq 10^{6n_0}$, which in turn gives
$$
k < \frac{6n_0\log10}{\log (2x)}.
$$
So for each $x$ under the assumption $y \leq 10^6$ we get a bound for $k$ and
we check for each $k$ below this bound and each $x\in \{ 2,3,6,7,10,11 \}$ if $T_k(x)$
has a prime factor $p$ with $p\leq y$. If not, then we are done, however, if such a $p$ exists, then we also
show, that for at least one such $p$ we have $\nu_p(T_k(x))\leq 12$, which shows that $n\leq 12$.
For $y<10^6$, $3\leq n\leq 12$ we get again very good bound for $k$ and a direct check will show that
equation \eqref{eq:1.5} has no solutions.

Now it is only left the case $y>10^6$, in which case we get $n<n_1$ by Lemma \ref{L_Baker_bounds},
and for each fixed $3\leq n\leq n_1$ we proceeded as follows.
Recall that $x \in \{ 2,3,6,7,10,11 \}$ is fixed, and we also fixed $3 \leq n \leq n_1$.
We took primes of the form $p:=2in+1$ with $i \in {\Bbb Z}$ and we considered equation \eqref{eq:1.5} locally modulo these
primes.
More precisely, we took the smallest such prime $p_1$ and put $o_1:=p_1-1$.
Then for all values of $k=1,\dots, o_1$ we checked whether $T_k(x) \pmod {p_1}$ is a perfect
power or not, and we built the set $K(o_1)$ of all those values of $k \pmod {o_1}$ for which $T_k(x) \pmod p$ was a perfect power. In principle this provided a list of all possible values of $k \pmod {o_1}$ for which we might have a solution.
Then we considered the next prime $p_2$ of the form $p_2:=2in+1$ with $i \in {\Bbb Z}$ and
we defined $o_2:=\textrm{LCM}(o_1, p_2-1)$. We expanded the set $K(o_1)$ to the set
$K_0(o_2)$ of all those numbers $1, \dots ,o_2$ which are congruent to
elements of $K(o_1)$ modulo $o_1$. Then we considered equation \eqref{eq:1.5} modulo $p_2$ and we excluded
from the set $K_0(o_2)$ all those elements $k$ for which $T_k(x) \pmod {p_2}$ is not a perfect power.
This way we got the set $K(o_2)$ of all possible values of $k \pmod {o_2}$ for which we might have a solution.
Continuing this procedure by taking new primes $p_3, p_4, \dots $ of the form $2in+1$ with $i \in {\Bbb Z}$, we
finished this procedure when the set $K(o_i)$ became empty, proving that equation \eqref{eq:1.5} has no solution for the given $x$ and $n$.

Suppose now that in equation \eqref{eq:1.5} we have $x \in \{ 4,5,8,9,12,13\}$. A direct application of Theorem \ref{theo:2.1}
to equation \eqref{eq:1.5} shows that for each $x \in \{ 4,5,8,9,12,13\}$ we may write $n \leq 5$. Finally, for every
$x \in \{ 4,5,8,9,12,13\}$ and $n \in \{3,4,5 \}$ we apply the same procedure as above in the case $y>10^6$ to conclude
that equation \eqref{eq:1.5} has no solution for the given $x$ and $n$.
This finishes the proof of our theorem.
\end{proof}

\noindent {\bf Remark.} The algorithms described in the above proof have been implemented in the
computer algebra package MAGMA \cite{MAGMA}. We mention that the running time of the
programme proving that we have no solution for
$x=11$ and $3 \leq n \leq n_1$ was more than 2 days on an Intel Xeon X5680 (Westmere EP) processor.
For $x=11$ to perform the computation up to the bound $n<n_0$ would have been too long.
This is the reason we had to use our bound $n_1$ proved in Lemma \ref{L_Baker_bounds} under the assumption
$y>10^6$.

\subsection*{Acknowledgements}

The research was supported in part by the University of Debrecen, and by grants K115479 (A.B.) and NK104208 (A.B.) of the Hungarian National Foundation for Scientific Research. This paper was supported by the J\'anos Bolyai Scholarship of the Hungarian Academy of Sciences. The research was granted by the Austrian Science Fund (FWF) under the project P 24801-N26 (I.P.).
The fourth author was supported in part by the Scientific and Technological Research Council of Turkey (T\"{U}B\.{I}TAK) under 2219-International Postdoctoral Research Scholarship Program together with the Research Fund of Uluda\u{g} University under Project No: F-2015/23.

\end{document}